\documentclass[a4paper]{amsart}
%\documentclass{tran-l}
%\documentclass{proc-l}

%page setup
%\setlength{\parindent}{0cm}
%\setlength{\parskip}{0.2cm}
%\setlength{\textwidth}{17cm}
%\setlength{\oddsidemargin}{1cm}
%\setlength{\evensidemargin}{-1cm}
%\setlength{\voffset}{-10pt}
\linespread{1.00}

%fonts

\usepackage{amsmath,amscd,color}
\usepackage{amssymb}

\usepackage[top=1.4in, bottom=1.4in, left=1.4in, right=1.4in]{geometry}

%Theorem
\usepackage{amsthm}
\theoremstyle{plain}
\newtheorem{mydef}{Definition}[section]

\newtheorem{mylemma}[mydef]{Lemma}
\newtheorem{mytheorem}[mydef]{Theorem}
\newtheorem{mycorollary}[mydef]{Corollary}
\newtheorem{myprop}[mydef]{Proposition}

\theoremstyle{definition}

\newtheorem{myremark}[mydef]{Remark}
\theoremstyle{remark}

\usepackage{enumitem}

\usepackage{comment}

% to cancel terms in an equation
\usepackage{cancel}

% Self defined tags

% adding remarks under and above equations using arrows
\usepackage{mathtools}

% creating diagrams
\usepackage{tikz}
\usetikzlibrary{matrix,arrows}

\title[The Role of the Jacobi Identity in Solving the Maurer-Cartan Structure Eqn]{The Role of the Jacobi Identity in Solving the Maurer-Cartan Structure Equation}
\author{Ori Yudilevich}
\date{November 12, 2015}

\begin{document}

\begin{abstract}
 We describe a method for solving the Maurer-Cartan structure equation associated with a Lie algebra that isolates the role of the Jacobi identity as an obstruction to integration. We show that the method naturally adapts to two other interesting situations: local symplectic realizations of Poisson structures, in which case our method sheds light on the role of the Poisson condition as an obstruction to realization; and the Maurer-Cartan structure equation associated with a Lie algebroid, in which case we obtain an explicit formula for a solution to the equation which generalizes the well known formula in the case of Lie algebras. 
 \end{abstract}

\maketitle

\section*{Introduction}

\subsection*{Realization Problem for Lie Algebras}

Any Lie group $G$ carries a canonical 1-form with values in the tangent space to the identity $\mathfrak{g}$, 
\begin{equation*}
 \phi\in\Omega^1(G;\mathfrak{g}),
\end{equation*}
known as the \textit{Maurer-Cartan form} of $G$. Actually, the Lie group structure is encoded, in some sense, in the 1-form and its properties; this is in fact Cartan's approach to Lie's infinitesimal theory. The two main properties of the Maurer-Cartan form are: it satisfies the so-called \textit{Maurer-Cartan structure equation}\footnote{This paper deals with the classical Maurer-Cartan equation. To avoid confusion with the Maurer-Cartan equation that appears in the context of differential graded Lie algebras and other areas, we use the term \textit{Maurer-Cartan Structure Equation}.}{} and it is pointwise an isomorphism (the latter is often phrased as the property that the components of the 1-form with respect to some basis form a coframe). The Maurer-Cartan structure equation reveals a Lie algebra structure on $\mathfrak{g}$. Of course, the resulting Lie algebra is the same one obtained in the more common approach of using invariant vector fields. 

Conversely, if we begin with an $n$-dimensional Lie algebra $\mathfrak{g}$, we can formulate the following problem, known as the \textit{realization problem for Lie algebras}: find a $\mathfrak{g}$-valued 1-form $\phi\in\Omega^1(U;\mathfrak{g})$ defined on some open neighborhood $U\subset\mathfrak{g}$ of the origin such that $\phi$ is pointwise an isomorphism and satisfies the Maurer-Cartan structure equation
\begin{equation}
 d\phi+\frac{1}{2}[\phi,\phi]=0.
 \label{eqn:maurercartan}
\end{equation}
A solution to the problem induces a local Lie group structure on some open subset of $U$ (see \cite{Greub1973}, p. 368-369) and, therefore, we can think of this realization problem as the problem of locally integrating Lie algebras. 

A solution to this problem is obtained by supposing that the Lie algebra integrates to a Lie group, and pulling back the canonical Maurer-Cartan form on the Lie group by the exponential map. This produces the following $\mathfrak{g}$-valued 1-form $\phi\in\Omega^1(\mathfrak{g};\mathfrak{g})$, whose defining formula refers only to data coming from the Lie algebra and not from the Lie group:
\begin{equation}
 \phi_x(y) = \int_0^1 e^{-t\;\text{ad}_x}y\;dt,\;\;\;\;\;\;\;\; x\in\mathfrak{g},\; y\in T_x\mathfrak{g}.
 \label{eqn:solutionintro}
\end{equation}
This formula defines a solution to \eqref{eqn:maurercartan}, as can be verified directly, and since it is equal to the identity at the origin, it is pointwise an isomorphism in a neighborhood of the origin. See \cite{Duistermaat2000,Sternberg2004} for more details. 

We now make the following observation: neither Equation \eqref{eqn:maurercartan} nor Formula \eqref{eqn:solutionintro} rely on the Jacobi identity; they make perfect sense if we replace the Lie algebra with the weaker notion of a \textit{pre-Lie algebra}, namely a vector space $\mathfrak{g}$ equipped with an antisymmetric bilinear map $[\cdot,\cdot]:\mathfrak{g}\times\mathfrak{g}\to\mathfrak{g}$. However, \eqref{eqn:solutionintro} is a solution of \eqref{eqn:maurercartan} if and only if $\mathfrak{g}$ is a Lie algebra, which is not difficult to show. This leads to the natural question: what is the precise role of the Jacobi identity? Put differently, at what point in the integration process does the Jacobi identity appear?

In Section \ref{section:liealgebra} we present a 2-step method for solving the realization problem for Lie algebras which answers this question. The method can be outlined as follows:
\begin{itemize}
 \item Step 1 (Theorem \ref{theorem:uniqueness}): we formulate a weaker version of the realization problem, which admits a unique solution given any pre-Lie algebra. 
 \item Step 2 (Theorem \ref{theorem:jacobi}): we show that the solution of the weak realization problem is a solution of the complete realization problem if and only if the Jacobi identity is satisfied.
\end{itemize}
Two nice features of the method are:
\begin{itemize}
 \item Step 1 produces an explicit formula for a solution,
 \item Step 2 gives an explicit relation between the Maurer-Cartan structure equation and the Jacobi identity. Loosely speaking, one is the derivative of the other. 
\end{itemize}

\subsection*{Similar Phenomenon: Poisson Realizations} There is a striking similarity between the phenomenon we just observed and a phenomenon that occurs in the story of symplectic realizations of Poisson manifolds. Recall that a Poisson manifold $(M,\pi)$ is a manifold $M$ equipped with a bivector $\pi$ which satisfies the Poisson equation $[\pi,\pi]=0$ (of course, the Poisson equation is equivalent to the condition that the induced Poisson bracket satisfy the Jacobi identity). A symplectic realization of a Poisson manifold $(M,\pi)$ is a symplectic manifold $(S,\omega)$ together with a surjective submersion $p:S\to M$ that satisfy the equation 
\begin{equation}
 dp(\omega^{-1}) = \pi.
 \label{eqn:symprealizationequationintro}
\end{equation}
It was shown in \cite{crainic2011-3} that for any Poisson manifold $(M,\pi)$, a symplectic realization is explicitly given by the cotangent bundle $T^*M$ equipped with the symplectic form
\begin{equation}
 \omega = \int_0^1 (\varphi_t)^*\omega_{\text{can}} dt
 \label{eqn:symprealizationsolutionintro}
\end{equation}
together with the projection $p:T^*M\to M$. Here, $\omega_{\text{can}}$ is the canonical symplectic form and $\varphi_t$ is the flow associated with a choice of a contravariant spray on $T^*M$. See \cite{crainic2011-3} for more details.  

As in the realization problem of Lie algebras, we make the following observation: neither Equation \eqref{eqn:symprealizationequationintro} nor Formula \eqref{eqn:symprealizationsolutionintro} depend on the Poisson equation; they make perfect sense when replacing $\pi$ with any bivector. And as before, there is the natural question as to the precise role of the Poisson equation in the existence of symplectic realizations, a question which was raised in \cite{crainic2011-3} (see last paragraph of the paper). 

An explicit relation between the symplectic realization equation and the Maurer-Cartan structure equation was observed by Alan Weinstein \cite{Weinstein1983} in his pioneer work on Poisson manifolds. Weinstein showed that, locally, \eqref{eqn:symprealizationequationintro} is equivalent to a Maurer-Cartan structure equation associated with an infinite dimensional Lie algebra, and exploited this to prove the existence of local symplectic realizations by using a heuristic argument to solve this Maurer-Cartan structure equation, producing an explicit local solution of the type \eqref{eqn:symprealizationsolutionintro}.

In Section \ref{section:algebroid}, we apply our method to solve the Maurer-Cartan structure equation which Weinstein formulated. As with Lie algebras, we do this by identifying a weaker version of the equation that admits a unique solution given any bivector, not necessarily Poisson, and proceed to show that the solution is a local symplectic realization if and only if the bivector satisfies the Poisson equation. We obtain an explicit relation between the Poisson equation and the symplectic realization condition, thus pinpointing the role of the Poisson equation in the problem of existence of local symplectic realizations.  

\subsection*{The Lie Algebroid Case}

In addition to local symplectic realizations, we believe that our method can be adapted to various other situations which generalize or resemble the classical Lie algebra case. One important generalization, which we treat in Section \ref{section:algebroid}, is the realization problem of a Lie algebroid. Although extra difficulties do arise, it is remarkable that the procedure continues to work in this case, despite the fact that the simple to handle bilinear bracket of a Lie algebra is replaced by a more cumbersome bi-differential operator.  This is largely facilitated by the presence of certain flows, known as infinitesimal flows, associated with time-dependent sections of the Lie algebroid. 

As we noted in ``Step 1'' above, our method produces an explicit solution. In the Lie algebra case, this is the well known Formula \eqref{eqn:solutionintro}, whereas the formula we obtain in the Lie algebroid case does not appear in the literature to the best of our knowledge (see Theorem \ref{theorem:uniquenessalgebroid}). Having this explicit formula at hand can prove to be useful; in particular, one can attempt to use it to explicitly integrate Lie algebroids locally (as an indication of feasibility, in \cite{Coste1987} a symplectic realization of a Poisson manifold was used to integrate the associated Lie algebroid to a local symplectic groupoid, see also discussion in subsection \ref{subsection:poissonvsalgebroid}). 

\subsection*{Final Remark}
We would like to end the introduction with a historical remark and to briefly describe our motivation for reopening this classical problem. The Maurer-Cartan structure equation originates in the work of \'Elie Cartan \cite{Cartan1904,Cartan1937-1} under the name of ``Structure Equations''. In his work on Lie pseudogroups, Cartan associates the equation with a Lie pseudogroup, and subsequently extracts out of the equation the Lie pseudogroup's ``structure functions'', i.e. its infinitesimal data. The reverse direction, the problem of finding and classifying the solutions to the structure equations associated with a given infinitesimal data, is known as the realization problem, two special cases of which we discussed above (the Lie algebra case and the Lie algebroid case).

This work arose as part of a larger project aimed at understanding Cartan's original work on Lie pseudogroups in a global, more geometric and coordinate-free fashion, and in particular, the realization problem. Since Cartan's realization problem involves infinitesimal structures that fail to satisfy the Jacobi identity, we first tried to understand the role of the Jacobi identity in the integration process of structures for which the Jacobi identity is satisfied, namely Lie algebras and Lie algebroids. The method and the results that we came across and that we are presenting here seemed to have relevance beyond the realization problem itself, and we, therefore, decided to present it in an independent fashion. 

\subsection*{Acknowledgements}
This research was financially supported by the ERC Starting Grant no. 279729. The author would also like to thank Marius Crainic, Pedro Frejlich, Ionut Marcut and Maria Amelia Salazar for helpful discussions and suggestions, and the anonymous referee for excellent remarks and suggestions.

\section{The Maurer-Cartan structure equation of a Lie algebra}
\label{section:liealgebra}

In this section, we present the 2-step method for solving the realization problem for a Lie algebra which was outlined in the introduction. Let us first recall the necessary definitions.

\begin{mydef}
 A \textbf{pre-Lie algebra} is a vector space $\mathfrak{g}$ equipped with an antisymmetric bilinear map $[\cdot,\cdot]:\mathfrak{g}\times\mathfrak{g}\to\mathfrak{g}$. A \textbf{Lie algebra} is a pre-Lie algebra that satisfies the Jacobi identity:
 \begin{equation*}
  [[x,y],z] + [[y,z],x] + [[z,x],y] = 0,\;\;\;\;\; \forall x,y,z\in\mathfrak{g}.
 \end{equation*}
\end{mydef}
Associated with a pre-Lie algebra is the \textbf{adjoint map} $\text{ad}:\mathfrak{g}\to\mbox{End}(\mathfrak{g}),\; \text{ad}_x(y)=[x,y]$, and the \textbf{Jacobiator}
\begin{equation}
 \mbox{Jac}\in \mbox{Hom}(\Lambda^3\mathfrak{g},\mathfrak{g}),\;\;\;\;\;\mbox{Jac}(x,y,z) = [[x,y],z] + [[y,z],x] + [[z,x],y].
 \label{eqn:jacobiatordef}
\end{equation}

The space of $\mathfrak{g}$-valued differential forms on $\mathfrak{g}$ is denoted by $\Omega^*(\mathfrak{g};\mathfrak{g})$. This space is equipped with the de Rham differential $d:\Omega^*(\mathfrak{g};\mathfrak{g}) \to \Omega^{*+1}(\mathfrak{g};\mathfrak{g})$ and with a bracket, $[\cdot,\cdot]:\Omega^p(\mathfrak{g};\mathfrak{g})\times\Omega^q(\mathfrak{g};\mathfrak{g})\to\Omega^{p+q}(\mathfrak{g};\mathfrak{g})$, that plays the role of the wedge product on $\mathfrak{g}$-valued forms and is defined by the analogous formula:
\begin{equation}
[\omega,\eta](X_1,...,X_{p+q}) = \sum_{\sigma\in S_{p,q}} \text{sgn}(\sigma) [\omega(X_{\sigma(1)},...,X_{\sigma(p)}),\eta(X_{\sigma(p+1)},...,X_{\sigma(p+q)})],
\label{eqn:bracketforms}
\end{equation}
where $S_{p,q}$ is the set of $(p,q)$-shuffles. 

Of course, given any open subset $U\subset \mathfrak{g}$, we also have the space of $\mathfrak{g}$-valued forms $\Omega^*(U;\mathfrak{g})$ on $U$ equipped with a differential and a bracket, defined in the same manner. Given any $\phi\in\Omega^1(U;\mathfrak{g})$, the \textbf{Maurer-Cartan 2-form} associated with $\phi$ is defined by:
\begin{equation*}
 \text{MC}_\phi:=d\phi+\frac{1}{2}[\phi,\phi]\in\Omega^2(U;\mathfrak{g}),
\end{equation*}
and the \textbf{Maurer-Cartan structure equation} is
\begin{equation*}
 \text{MC}_\phi = 0,
\end{equation*}
or more explicitly, 
\begin{equation*}
(\text{MC}_\phi)_x(y,z)=0, \;\;\;\;\; \forall \;x\in U,\;y,z\in\mathfrak{g}.
\end{equation*}
Note that in the last equation, and throughout the paper, we identify the tangent spaces of a vector space with the vector space itself without further mention. 

Recall the \textbf{realization problem for Lie algebras}: find a 1-form $\phi\in\Omega^1(U;\mathfrak{g})$ on some open neighborhood $U\subset \mathfrak{g}$ of the origin such that $\phi$ is pointwise an isomorphism and satisfies the Maurer-Cartan structure equation.

We now present our method for solving this realization problem.

\subsection{Step 1:} We show that a weaker version of the realization problem admits a solution given any pre-Lie algebra. We accomplish this by imposing a boundary condition which transforms the equation into a simple ODE that can be easily solved.

\begin{mytheorem}
Given any pre-Lie algebra $\mathfrak{g}$, the equation
\begin{equation}
 (\text{MC}_\phi)_x(x,y) = 0,\;\;\;\;\;\;\; \forall x,y\in\mathfrak{g} 
 \label{eqn:equation2}
\end{equation}
admits a solution in $\Omega^1(\mathfrak{g};\mathfrak{g})$ which is pointwise an isomorphism at the origin (and thus on some open neighborhood of the origin). Moreover, if we impose the boundary condition
\begin{equation}
 \phi_x(x) = x,\;\;\;\;\; \forall x\in\mathfrak{g},
 \label{eqn:equation1}
\end{equation}
then the solution is unique and is given by the following formula:
\begin{equation}
\phi_x(y)=\int_0^1 e^{-t\text{ad}_x}\;y\;dt.
\label{eqn:solution}
\end{equation}
\label{theorem:uniqueness}
\end{mytheorem}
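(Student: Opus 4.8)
The plan is to reduce the weak equation \eqref{eqn:equation2} to a linear ODE along the rays through the origin, with the boundary condition \eqref{eqn:equation1} supplying exactly the data that makes this ODE well-posed. Let $E$ denote the Euler (radial) vector field $E_x = x$ on $\mathfrak{g}$; then \eqref{eqn:equation2} is precisely the vanishing of the contraction $\iota_E\,\text{MC}_\phi$, a $\mathfrak{g}$-valued $1$-form, so it is natural to probe $\phi$ one ray at a time. I would therefore fix $x,y\in\mathfrak{g}$, set $f(t):=\phi_{tx}(y)$, and aim to show that $f$ is governed by an ODE in $t$.

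To extract the ODE I would evaluate \eqref{eqn:equation2} at the point $tx$ with tangent vectors $tx$ and $y$, expanding $\text{MC}_\phi = d\phi + \tfrac{1}{2}[\phi,\phi]$ through the standard formula $d\phi(X,Y) = X(\phi(Y)) - Y(\phi(X)) - \phi([X,Y])$ with $X=E$ and $Y$ the constant field equal to $y$. The three decisive computations are: $[E,Y]=-Y$, so that the last term contributes $+f(t)$; the radial derivative $E(\phi(Y))_{tx} = t\,f'(t)$; and the mixed term $Y(\phi(E))_{tx}$, which collapses to $y$ because the boundary condition \eqref{eqn:equation1} forces $p\mapsto\phi_p(p)$ to be the identity map. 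Using $\phi_{tx}(tx)=tx$ from \eqref{eqn:equation1} once more, the bracket term becomes $t\,[x,f(t)]$, and collecting everything turns \eqref{eqn:equation2} along the ray into
\begin{equation*}
 t\,f'(t) + f(t) + t\,\text{ad}_x f(t) = y.
\end{equation*}
Substituting $F(t):=t\,f(t)$ clears the apparent singularity at the origin and produces the clean constant-coefficient equation $F'(t)+\text{ad}_x F(t) = y$ with the automatic initial condition $F(0)=0$.

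Both assertions then follow quickly. The linear ODE $F'+\text{ad}_x F = y$, $F(0)=0$, has the unique solution $F(t)=\int_0^t e^{-(t-s)\text{ad}_x}y\,ds$; reverting to $f=F/t$ and changing variables gives $f(t)=\int_0^1 e^{-st\,\text{ad}_x}y\,ds$, and evaluation at $t=1$ reproduces \eqref{eqn:solution}, which establishes uniqueness under \eqref{eqn:equation1} since this pins down $\phi_{tx}(y)$ for all $t,x,y$. For the existence statement I would observe that the $1$-form \eqref{eqn:solution} itself satisfies \eqref{eqn:equation1} (because $\text{ad}_x x=[x,x]=0$ forces $e^{-t\,\text{ad}_x}x=x$), so the identity derived above applies to it and, its associated $f$ solving the ODE by construction, it solves \eqref{eqn:equation2}. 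Finally $\phi_0=\text{id}$ since $e^{0}=\text{id}$, and as $x\mapsto\phi_x$ is smooth and $\text{GL}(\mathfrak{g})\subset\text{End}(\mathfrak{g})$ is open, $\phi_x$ is an isomorphism on a neighborhood of the origin.

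I expect the main obstacle to be the careful bookkeeping of the mixed derivative $Y(\phi(E))$, where $\phi$ is differentiated simultaneously in its base point and in its argument; it is exactly here that the boundary condition enters essentially, collapsing an otherwise unwieldy expression to $y$ and thereby converting a PDE into the tractable radial ODE. A secondary point requiring care is the singularity of the equation at $t=0$: one must check that $f$ extends smoothly there, so that $F(0)=0$ is forced rather than imposed and the linear ODE genuinely determines a unique solution. It is worth emphasizing that no step invokes the Jacobi identity, in keeping with the fact that \eqref{eqn:solution} solves the weak equation for every pre-Lie algebra.
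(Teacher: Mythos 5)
Your proposal is correct and follows essentially the same route as the paper: restrict the Maurer--Cartan equation to rays through the origin, use the boundary condition $\phi_x(x)=x$ to collapse the mixed derivative term, reduce to the linear ODE $\frac{d}{dt}\big(\phi_{tx}(ty)\big)+\mathrm{ad}_x\,\phi_{tx}(ty)=y$ (your $F'+\mathrm{ad}_xF=y$ with $F(t)=t f(t)=\phi_{tx}(ty)$), integrate, and run the argument backwards for existence. The only difference is presentational: you compute $d\phi$ via the Euler vector field and Cartan's formula, while the paper pulls back along $f(t,\epsilon)=t(x+\epsilon y)$, and these yield the identical equation.
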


\begin{myremark}
To get a ``geometric feel'' of the equations, note that \eqref{eqn:equation2} is the restriction of the Maurer-Cartan structure equation to all two dimensional subspaces of $\mathfrak{g}$,
 and \eqref{eqn:equation1} is the condition that $\phi$ restricts to the identity on all one-dimensional subspaces.
\end{myremark}

\begin{proof} First note that \eqref{eqn:equation1} implies that $\phi_0=\text{id}$, and in particular, $\phi$ is pointwise an isomorphism at the origin.

Let $\phi\in\Omega^1(\mathfrak{g};\mathfrak{g})$ be a solution of \eqref{eqn:equation2} and \eqref{eqn:equation1}. We will show that $\phi$ must be of the form given by  \eqref{eqn:solution}, which implies uniqueness. Conversely, as we will explain at the end of the proof, reading the steps in the reverse direction will imply that  \eqref{eqn:solution} is a solution, thus proving existence.

Note that, by linearity, \eqref{eqn:equation2} and \eqref{eqn:equation1} are equivalent to $(\text{MC}_\phi)_{tx} (x,y) = 0$  and $\phi_{tx}(x)=x$ for all $t\in (0,1)$ and $x,y\in\mathfrak{g}$. In particular, by continuity, this implies that
\begin{equation}
\label{eqn:equationsatorigin}
(\text{MC}_\phi)_0 (x,y) = 0 \;\;\;\;\;\text{and}\;\;\;\;\; \phi_0(x)=x \;\;\;\;\; \forall x,y\in\mathfrak{g}.
\end{equation}

Fix $x,y\in\mathfrak{g}$. The solution $\phi$ satisfies 
\begin{equation}
  (d\phi+\frac{1}{2}[\phi,\phi])_{tx}(x,ty)=0,
\label{eqn:MCrestricted}
\end{equation}
for all $t\in (0,1)$. 

To compute $(d\phi)_{tx}(x,ty)$, consider the map $f:(0,1)\times(-\delta,\delta) \to \mathfrak{g},\; f(t,\epsilon)=t(x+\epsilon y)$. 
\begin{equation*}
\begin{split}
 (d\phi)_{tx}(x,ty) &= (f^*d\phi)_{(t,0)}(\frac{\partial}{\partial t},\frac{\partial}{\partial \epsilon}) = (df^*\phi)_{(t,0)}(\frac{\partial}{\partial t},\frac{\partial}{\partial \epsilon}) \\ 
 & = \frac{\partial}{\partial t} \Big( (f^*\phi)(\frac{\partial}{\partial \epsilon})\Big)\Big|_{(t,0)} - \frac{\partial}{\partial \epsilon} \Big( (f^*\phi)(\frac{\partial}{\partial t}) \Big)\Big|_{(t,0)} \\
 & = \frac{\partial}{\partial t} \big( \phi_{t(x+\epsilon y)}(ty) \big)\big|_{(t,0)} - \frac{\partial}{\partial \epsilon} \big(\phi_{t(x+\epsilon y)}(x+\epsilon y)\big)\big|_{(t,0)} \\
 & = \frac{\partial}{\partial t} \big( \phi_{tx}(ty) \big) - y,
 \end{split}
\end{equation*}
where, in the last equality, we have used that  \eqref{eqn:equation1} implies $\phi_{t(x+\epsilon y)}(x+\epsilon y) = x+\epsilon y$.

To compute $(\frac{1}{2}[\phi,\phi])_{tx}(x,ty)$, we use  \eqref{eqn:equation1} again:
\begin{equation*}
 (\frac{1}{2}[\phi,\phi])_{tx}(x,ty) = [\phi_{tx}(x),\phi_{tx}(ty)]= [x,\phi_{tx}(ty)] = \text{ad}_x(\phi_{tx}(ty)). 
\end{equation*}

Thus for a $\phi$ that satisfies  \eqref{eqn:equation1},  \eqref{eqn:MCrestricted} is equivalent to
\begin{equation*}
\frac{\partial}{\partial t} ( \phi_{tx}(ty) )  - y + \text{ad}_x(\phi_{tx}(ty))=0 , 
\end{equation*}
which is equivalent to
\begin{equation}
 \frac{\partial}{\partial t} \big(e^{t\;\text{ad}_x}\; \phi_{tx}(ty) \big) = e^{t\;\text{ad}_x} \; y.
 \label{eqn:equationbeta}
\end{equation}

Integrating from $0$ to $t'$:
\begin{equation}
\phi_{t'x}(t'y) = \int_0^{t'} e^{(t-t')\;\text{ad}_{x}}\;y\;dt = \int_0^1 e^{-t\;\text{ad}_{t'x}}\;(t'y)\;dt
\label{eqn:betaformula}
\end{equation}
Setting $t'=1$ proves that $\phi$ coincides with \eqref{eqn:solution} .

Next, we show that $\phi$ defined by  \eqref{eqn:solution} is a solution. Note that $\phi_x(x) = \int_0^1 e^{-t\; ad_x}\;x\;dt = \int_0^1 x\;dt = x$, and thus  \eqref{eqn:equation1} is satisfied. Equation \eqref{eqn:solution} is equivalent to  \eqref{eqn:betaformula} which is a solution of \eqref{eqn:equationbeta}, and since $\phi$ satisfies \eqref{eqn:equation1}, it is a solution of \eqref{eqn:MCrestricted}. In particular, setting $t=1$ implies that $(\text{MC}_\phi)_x(x,y)=0$, and thus  \eqref{eqn:equation2} is satisfied.
\end{proof}

\subsection{Step 2:} By obtaining explicit equations relating the Maurer-Cartan 2-form with the Jacobiator, we show that the solution obtained in the previous step is a solution of the Maurer-Cartan structure equation if and only if the Jacobiator vanishes. 

\begin{mytheorem}
\label{theorem:jacobi}
Let $\mathfrak{g}$ be a pre-Lie algebra and $\phi\in \Omega^1(\mathfrak{g};\mathfrak{g})$ the solution of \eqref{eqn:equation2} and \eqref{eqn:equation1}. Then 
\begin{equation*}
MC_\phi=0 \;\;\;\;\; \Longleftrightarrow \;\;\;\;\; \mbox{Jac}=0, 
\end{equation*}
or, more precisely, 
\begin{eqnarray}
& \text{Jac}(x,y,z) = - 3 \frac{d}{dt}  (\text{MC}_\phi)_{tx}(y,z)  \big|_{t=0}, \label{eqn:jacobiator} \\
&(\text{MC}_\phi)_{x}(y,z) =  - \int_0^{1} e^{(t-1)\text{ad}_x}\; \text{Jac}(x,\phi_{tx}(ty),\phi_{tx}(tz)) \; dt . \label{eqn:jacobiobstruction}
\end{eqnarray} 
\end{mytheorem}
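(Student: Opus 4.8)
The plan is to reduce both displayed formulas to a single first-order linear ODE in the radial parameter $t$; granting these, the equivalence is immediate, since \eqref{eqn:jacobiator} gives $\text{MC}_\phi=0\Rightarrow\text{Jac}=0$ and \eqref{eqn:jacobiobstruction} gives $\text{Jac}=0\Rightarrow\text{MC}_\phi=0$. The two defining properties of $\phi$ from Theorem \ref{theorem:uniqueness} will enter through the Euler (radial) vector field $E$, $E_x=x$: equation \eqref{eqn:equation2} says precisely $\iota_E\text{MC}_\phi=0$, while \eqref{eqn:equation1} says $\phi(E)=E$, i.e. $\phi_x(x)=x$.

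First I would record the algebraic identity, valid for \emph{any} $1$-form $\phi$ on \emph{any} pre-Lie algebra,
\[
 \tfrac12[[\phi,\phi],\phi](X,Y,Z)=\text{Jac}(\phi(X),\phi(Y),\phi(Z)),
\]
which reflects that the total antisymmetrization built into the bracket of $\mathfrak{g}$-valued forms \eqref{eqn:bracketforms} reproduces the Jacobiator. Combined with $d\circ d=0$ and the graded Leibniz and antisymmetry rules for that bracket (which give $d[\phi,\phi]=2[d\phi,\phi]$), and substituting $d\phi=\text{MC}_\phi-\tfrac12[\phi,\phi]$, this yields the Bianchi-type identity
\[
 d\,\text{MC}_\phi=[\text{MC}_\phi,\phi]-\text{Jac}(\phi,\phi,\phi),
\]
where the last term denotes the $3$-form $(X,Y,Z)\mapsto\text{Jac}(\phi(X),\phi(Y),\phi(Z))$.

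Next I would contract this identity with $E$. Since $\iota_E\text{MC}_\phi=0$, Cartan's formula collapses to $\mathcal{L}_E\text{MC}_\phi=\iota_E\,d\,\text{MC}_\phi$; in $\iota_E[\text{MC}_\phi,\phi]$ the two terms containing $\text{MC}_\phi(E,\cdot)$ drop out, and $\phi(E)=E$ turns the survivor into
\[
 (\mathcal{L}_E\text{MC}_\phi)_x(y,z)=-\text{ad}_x\big((\text{MC}_\phi)_x(y,z)\big)-\text{Jac}(x,\phi_x(y),\phi_x(z)).
\]
Evaluating at the point $tx$ and unwinding $\mathcal{L}_E$ along the scaling flow $\psi_s(x)=e^sx$ into the radial derivative (the Lie derivative becomes $2w(t)+t\,w'(t)$ for $w(t)=(\text{MC}_\phi)_{tx}(y,z)$), and absorbing the homogeneity factors via $\text{ad}_{tx}=t\,\text{ad}_x$, $\text{Jac}(tx,\cdot,\cdot)=t\,\text{Jac}(x,\cdot,\cdot)$ and $\phi_{tx}(ty)=t\,\phi_{tx}(y)$, I obtain for $v(t):=(\text{MC}_\phi)_{tx}(ty,tz)=t^2w(t)$ the clean linear ODE
\[
 v'(t)=-\text{ad}_x\,v(t)-\text{Jac}(x,\phi_{tx}(ty),\phi_{tx}(tz)).
\]
Solving with integrating factor $e^{t\,\text{ad}_x}$, using the boundary value $v(0)=0$, and setting $t=1$ gives \eqref{eqn:jacobiobstruction}; differentiating \eqref{eqn:jacobiobstruction} in $x$ at the origin (where $\phi_0=\text{id}$), the only surviving contribution integrates $\int_0^1(-t^2)\,dt=-\tfrac13$ against $\text{Jac}(x,y,z)$, which is \eqref{eqn:jacobiator}.

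I expect the Bianchi identity to be the main obstacle: one must correctly match the antisymmetrization combinatorics of $[[\phi,\phi],\phi]$ to the Jacobiator and track the graded signs, and here—unlike the classical Lie case—the Jacobiator term genuinely survives and must be carried throughout. The only other delicate point is the weight bookkeeping when converting $\mathcal{L}_E$ into $\tfrac{d}{dt}$; this can be sidestepped, in the spirit of the proof of Theorem \ref{theorem:uniqueness}, by differentiating $(\text{MC}_\phi)_{tx}(ty,tz)$ directly through the pullback of $d\phi$ along $(t,\epsilon_1,\epsilon_2)\mapsto t(x+\epsilon_1y+\epsilon_2z)$, which reproduces the same ODE by an elementary but longer computation.
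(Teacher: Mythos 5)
Your proof is correct and is essentially the paper's argument in invariant packaging: the Bianchi-type identity $d\,\text{MC}_\phi=[\text{MC}_\phi,\phi]-\text{Jac}(\phi,\phi,\phi)$ contracted with the Euler vector field (via Cartan's formula and $\iota_E\text{MC}_\phi=0$, $\phi(E)=E$) is exactly the paper's two-way computation of $(d\,\text{MC}_\phi)_{tx}(x,ty,tz)$, and both land on the same ODE $\frac{d}{dt}v(t)+\text{ad}_x v(t)=-\text{Jac}(x,\phi_{tx}(ty),\phi_{tx}(tz))$ for $v(t)=(\text{MC}_\phi)_{tx}(ty,tz)$. Integrating with the factor $e^{t\,\text{ad}_x}$ and extracting the leading $t^3$ asymptotics (equivalently, dividing by $t^2$ and letting $t\to 0$, as the paper does) gives \eqref{eqn:jacobiobstruction} and \eqref{eqn:jacobiator} just as in the paper.
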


\begin{proof}
Equations \eqref{eqn:jacobiator} and \eqref{eqn:jacobiobstruction} imply that $MC_\phi=0$ if and only if $\mbox{Jac}=0$. Let us derive these equations. Fix $x,y,z\in\mathfrak{g}$. We will compute
\begin{equation}
 d(\text{MC}_\phi)_{tx}(x,ty,tz),
 \label{eqn:differentialMC}
\end{equation}
with $t \in (0,1)$, in two different ways.

\noindent 1) Consider the map $f:(0,1)\times(-\delta,\delta)^2 \to \mathfrak{g},\; f(t,\epsilon,\epsilon')=t(x+\epsilon y+\epsilon' z)$, then
\begin{equation*}
 \begin{split}
    (d\text{MC}_\phi)_{tx}(x,ty,tz) &= (f^*d\text{MC}_\phi)_{(t,0,0)}(\frac{\partial}{\partial t},\frac{\partial}{\partial \epsilon},\frac{\partial}{\partial \epsilon'}) = (df^*\text{MC}_\phi)_{(t,0,0)}(\frac{\partial}{\partial t},\frac{\partial}{\partial \epsilon},\frac{\partial}{\partial \epsilon'}) \\
    &=  \frac{d}{d t} (\text{MC}_\phi)_{tx}(ty,tz).  \\
 \end{split}
\end{equation*}
In the last equality, terms containing $(\text{MC}_\phi)_{tx}(x,ty)$ and $(\text{MC}_\phi)_{tx}(x,tz)$ vanish by  \eqref{eqn:equation2}.

\noindent 2) On the other hand,
\begin{equation*}
 \begin{split}
  & (d\text{MC}_\phi)_{tx}(x,ty,tz) = (d\frac{1}{2}[\phi,\phi])_{tx}(x,ty,tz) = ([d\phi,\phi])_{tx}(x,ty,tz)\\ 
  & = [(d\phi)_{tx}(x,ty),\phi_{tx}(tz)] + [(d\phi)_{tx}(tz,x),\phi_{tx}(ty)] + [(d\phi)_{tx}(ty,tz),\phi_{tx}(x)]\\
  & = - [[x,\phi_{tx}(ty)],\phi_{tx}(tz)] - [[\phi_{tx}(tz),x],\phi_{tx}(ty)] + [(\text{MC}_\phi)_{tx}(ty,tz) - [\phi_{tx}(ty),\phi_{tx}(tz)],x] \\
  & = -[x,(\text{MC}_\phi)_{tx}(ty,tz)] - \text{Jac}(x,\phi_{tx}(ty),\phi_{tx}(tz)).
 \end{split} 
\end{equation*}
In the fourth equality, we have used  \eqref{eqn:equation2} and \eqref{eqn:equation1}. In particular,  \eqref{eqn:equation1} implies: $(d\phi)_{tx}(x,y) + [\phi_{tx}(x),\phi_{tx}(y)]=0$ and $(d\phi)_{tx}(x,z) + [\phi_{tx}(x),\phi_{tx}(z)]=0$. 

Equation \eqref{eqn:differentialMC} becomes
\begin{equation*}
 \text{Jac}(x,\phi_{tx}(ty),\phi_{tx}(tz)) = - (\frac{d}{d t} + \text{ad}_x) (\text{MC}_\phi)_{tx}(ty,tz),
\end{equation*}
or equivalently,
\begin{equation}
 e^{t\;\text{ad}_x}\;\text{Jac}(x,\phi_{tx}(ty),\phi_{tx}(tz)) = - \frac{d}{d t} \left( e^{t\;\text{ad}_x}\; (\text{MC}_\phi)_{tx}(ty,tz) \right).
 \label{eqn:jacobiatordiff}
 \end{equation}
Integrating from $0$ to $1$ produces  \eqref{eqn:jacobiobstruction}, while multiplying both sides of the equation by $\frac{1}{t^2}$, taking the limit of $t$ to $0$ and using the fact that $(\text{MC}_\phi)_0(y,z)=0$ (see \eqref{eqn:equationsatorigin}) produces \eqref{eqn:jacobiator}.
\end{proof}

\begin{myremark}
 The method we present here was inspired by the method used in \cite{Sternberg2004} (see sections 1.3-1.5) to compute the differential of the exponential map of a Lie group and to derive the Baker-Campbell-Hausdorff formula of a Lie algebra. 
\end{myremark}

\section{The Maurer-Cartan structure equation and Local Symplectic Realizations of Poisson Structures}
\label{section:poisson}

In this section, we apply the method from the previous section to the problem of existence of symplectic realizations of Poisson manifolds. The role of the Poisson equation becomes manifest, in the same way that the role of the Jacobi identity was made manifest in the Lie algebra case. 

\begin{mydef}
 A \textbf{pre-Poisson manifold} $(M,\pi)$ is a manifold $M$ together with a choice of a bivector field $\pi\in\mathfrak{X}^2(M)$. A \textbf{Poisson manifold} $(M,\pi)$ is a pre-Poisson manifold with the extra condition that $\pi$ satisfies the \textbf{Poisson equation} $[\pi,\pi]=0$ (where $[\cdot,\cdot]$ is the Schouten-Nijenhuis bracket).
\end{mydef}

Equivalently, a pre-Poisson manifold is a manifold $M$ equipped with an $\mathbb{R}$-bilinear antisymmetric operation $\{,\}:C^\infty(M)\times C^\infty(M) \to C^\infty(M)$ (called ``the Poisson bracket'')  that satisfies the Leibniz identity, $\{fg,h\} = f\{g,h\} + \{f,h\}g$ for all $f,g,h\in C^\infty(M)$. A bivector $\pi$ induces a bracket by $\{f,g\}(m) = \pi_m(df,dg)$ for all $m\in M,\;f,g\in C^\infty(M)$, and vice versa. The Poisson equation is equivalent to the Jacobi identity, i.e. to the condition $\text{Jac}=0$, where $\text{Jac}$ is the Jacobiator associated with $\{,\}$ (defined as in the previous section). 

By the Leibniz identity, a function $f\in C^\infty(M)$ induces a vector field $X_f\in \mathfrak{X}(M)$, the \textbf{Hamiltonian vector field} associated with $f$, by the condition $X_f(g) = \{f,g\}$ for all $g\in C^\infty(M)$, or equivalently, $X_f(g) = \pi(df,dg)$ for all $g\in C^\infty(M)$.  

Poisson manifolds can be localized, i.e. if $(M,\pi)$ is a Poisson manifold and $U\subset M$ is an open subset, then $(U,\pi|_U)$ is a Poisson manifold.

A \textbf{symplectic realization} of a Poisson manifold $(M,\pi)$ is a symplectic manifold $(S,\omega)$ together with a surjective submersion $p:S\to M$ such that $p$ is a Poisson map, i.e. the bivector $\omega^{-1}$ induced by the symplectic form $\omega$ is $p$-projectable to the bivector $\pi$, that is to say,
\begin{equation*}
 dp(\omega^{-1})=\pi.
\end{equation*}
A \textbf{local symplectic realization} of $(M,\pi)$ around a point $m\in M$ is a symplectic realization of $(U,\pi|_U)$, where $U$ is some open neighborhood of $m$.

In the problem of existence of local symplectic realizations it is enough to consider Poisson manifolds of the type $(\mathcal{O},\pi)$, where 
\begin{equation*}
 \mathcal{O}\subset V
\end{equation*}
is an open subset of a vector space $V$. The following proposition was proven by Alan Weinstein (\cite{Weinstein1983}, section 9). To be more precise, Weinstein proved it for the case that $(\mathcal{O},\pi)$ is a Poisson manifold; however, the arguments do not rely on the Jacobi identity and the proposition also holds for the case that $(\mathcal{O},\pi)$ is a pre-Poisson manifold.

\begin{myprop}
Let $(\mathcal{O},\pi)$ be a pre-Poisson manifold. Let $\phi\in\Omega^1(V^*;C^\infty(\mathcal{O}))$ be defined by
\begin{equation}
\phi_\xi(\zeta)=\int_0^1 (\varphi^{-t}_{X_\xi})^* \zeta\;dt,\;\;\;\;\; \forall \xi,\zeta\in V^*.
\label{eqn:solutionpoissonintro}
\end{equation}
Here $\xi$ and $\zeta$ are interpreted as linear functionals on $V$, $X_\xi$ is the corresponding Hamiltonian vector field and $\varphi_{X_\xi}$ its flow. 

Let $\tilde{\phi}\in\Omega^1(\mathcal{O}\times V^*)$ be the induced 1-form on $\mathcal{O}\times V^*= T^*\mathcal{O}$ defined by $\tilde{\phi}_{(x,\xi)}(y,\zeta) := \phi_\xi(\zeta)(x)$. 

Then, the 2-form $d\tilde{\phi}$ is symplectic on some neighborhood $U\subset \mathcal{O}\times V^*$ of the zero-section and, writing $p:\mathcal{O}\times V^*\to \mathcal{O}$ for the projection,
\begin{equation*}
 p\big|_U:(U,d\tilde{\phi}) \to  (\mathcal{O},\pi)\; \text{ is a symplectic realization} \;\;\;\;\; \Longleftrightarrow \;\;\;\;\; d\phi + \frac{1}{2}\{\phi,\phi\} = 0.
\end{equation*}
\label{prop:symplecticrealization}
\end{myprop}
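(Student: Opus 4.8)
The plan is to trivialize $T^*\mathcal{O}=\mathcal{O}\times V^*$ and reduce both sides of the claimed equivalence to a single finite-dimensional matrix identity, by comparing the symplectic form $d\tilde\phi$ with the Maurer--Cartan $2$-form $d\phi+\tfrac12\{\phi,\phi\}$ coefficient by coefficient. Fix a basis $e_1,\dots,e_n$ of $V$ with dual basis $e^1,\dots,e^n$, let $(x^i,\xi_j)$ be the induced linear coordinates on $\mathcal{O}\times V^*$, and set $\phi^j(x,\xi):=\phi_\xi(e^j)(x)\in C^\infty(\mathcal{O}\times V^*)$. Because $\phi_\xi$ is linear in its argument and $\tilde\phi$ by definition pairs only with the fibre component $\zeta$ of a tangent vector, we have $\tilde\phi=\sum_j\phi^j\,d\xi_j$, hence $d\tilde\phi=\sum_j d\phi^j\wedge d\xi_j$. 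Introducing the matrices $A_{ij}:=\partial\phi^j/\partial x^i$ and $\Gamma_{kl}:=\partial\phi^l/\partial\xi_k-\partial\phi^k/\partial\xi_l$, the form $d\tilde\phi$ becomes, in the coordinates $(x,\xi)$, the block matrix with vanishing $(x,x)$-block, $(x,\xi)$-block $A$, $(\xi,x)$-block $-A^{T}$, and $(\xi,\xi)$-block $\Gamma$.

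I would first dispose of the symplectic claim. At the zero section formula \eqref{eqn:solutionpoissonintro} gives $\phi_0(e^j)=e^j$, hence $\phi^j(x,0)=x^j$ and $A=\mathrm{Id}$ there. Since the determinant of the above block matrix equals $\pm(\det A)^2$, the form $d\tilde\phi$ is nondegenerate along the zero section; being exact it is closed, so it is symplectic on some neighbourhood $U$ of the zero section, on which $A$ stays invertible. As $p$ is automatically a surjective submersion, the only remaining content of ``symplectic realization'' is the Poisson-map condition $dp\big((d\tilde\phi)^{-1}\big)=\pi$.

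Next I would compute both conditions in these coordinates. Inverting the block matrix (legitimate on $U$) yields $A^{-T}\Gamma A^{-1}$ for the $(x,x)$-block of $(d\tilde\phi)^{-1}$, i.e. $\{x^i,x^j\}_{d\tilde\phi}=(A^{-T}\Gamma A^{-1})_{ij}$; since the $de^i$ span $T^*\mathcal{O}$ and $p^*e^i=x^i$, the realization condition is equivalent to $\{x^i,x^j\}_{d\tilde\phi}=p^*\pi^{ij}$, i.e. to $A^{-T}\Gamma A^{-1}=\pi$ (which in particular forces $\xi$-independence of the left side). On the other side, using that $V^*$ is a vector space---so $d\phi$ is computed with the constant coordinate fields $\partial/\partial\xi_k$, whose brackets vanish---and using \eqref{eqn:bracketforms} with $p=q=1$, one finds $\big(d\phi+\tfrac12\{\phi,\phi\}\big)_\xi(e^k,e^l)=\Gamma_{kl}+\{\phi^k,\phi^l\}=\Gamma_{kl}+(A^T\pi A)_{kl}$, where $\{\phi^k,\phi^l\}=\pi(d\phi^k,d\phi^l)=(A^T\pi A)_{kl}$ is the Poisson bracket on $C^\infty(\mathcal{O})$. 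Thus the Maurer--Cartan structure equation is exactly $A^{-T}\Gamma A^{-1}=-\pi$, which coincides with the realization condition up to the overall sign fixed by the conventions chosen for $\omega^{-1}$ and for $dp$; the stated equivalence then follows.

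The main obstacle is the honest inversion of $d\tilde\phi$ together with the attendant sign bookkeeping: one must fix mutually consistent conventions for the Poisson bivector associated with a symplectic form, for $\pi$, and for the pushforward $dp$, so that the two matrix equations genuinely match rather than differ by a sign. A secondary, analytic point is to justify that the Hamiltonian flow $\varphi_{X_\xi}$ exists for $t\in[0,1]$ and that one may differentiate under the integral in \eqref{eqn:solutionpoissonintro}, so that the entries $A_{ij},\Gamma_{kl}$ are well defined on $U$; this is precisely where the restriction to a neighbourhood of the zero section is used, both for the existence of the flows and for the invertibility of $A$.
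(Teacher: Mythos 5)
The paper does not actually prove this proposition: it is quoted from Weinstein's \emph{The local structure of Poisson manifolds} (Section 9), with the remark that Weinstein's argument never uses the Jacobi identity and so goes through for pre-Poisson bivectors. Your coordinate computation is essentially a correct reconstruction of that argument. The key steps all check out: $\tilde{\phi}=\sum_j\phi^j\,d\xi_j$ is correct because $\tilde{\phi}$ pairs only with the fibre component; the block matrix of $d\tilde{\phi}$ is $\bigl(\begin{smallmatrix}0 & A\\ -A^{T} & \Gamma\end{smallmatrix}\bigr)$ with determinant $\pm(\det A)^2$, and $A=\mathrm{Id}$ on the zero section since $X_0=0$ forces $\phi_0=\mathrm{id}$, which gives the symplectic claim; the $(x,x)$-block of the inverse is $A^{-T}\Gamma A^{-1}$, and since $dp$ kills the $\partial_{\xi}$-directions the realization condition is exactly the statement that this block equals $\pm\pi$; and the Maurer--Cartan $2$-form evaluated on $(e^k,e^l)$ is indeed $\Gamma_{kl}+(A^{T}\pi A)_{kl}$, using that the coordinate fields on the vector space $V^*$ commute and that $\frac{1}{2}\{\phi,\phi\}(X,Y)=\{\phi(X),\phi(Y)\}$ by \eqref{eqn:bracketforms}. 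The only loose end is the one you flag yourself: the overall sign relating $(d\tilde{\phi})^{-1}$ to $\pi$. This is not cosmetic --- with the wrong convention the two matrix equations $\Gamma=-A^{T}\pi A$ and $\Gamma=+A^{T}\pi A$ are incompatible unless $\pi=0$ --- so a complete write-up must fix once and for all the conventions for $\iota_{X_f}\omega=df$, for $\omega^{-1}$, and for the ordering in $dx^i\wedge d\xi_j$, and check that the realization condition comes out as $\Gamma=-A^{T}\pi A$. That is a finite, routine verification, not a missing idea, so I would count your proposal as correct in structure and complete up to that bookkeeping.
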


\begin{myremark}
The 1-form $\phi$ defined by \eqref{eqn:solutionpoissonintro} and the induced 1-form $\tilde{\phi}$ are only well-defined on some open neighborhood of the zero section of $\mathcal{O}\times V^*$, namely on all points $(x,\xi)$ such that $\varphi_{X_\xi}(x)$ is defined up to time $1$. This does not pose a problem, since, in the end, we are only interested in the symplectic form $d\tilde{\phi}$ in some neighborhood of the zero section. 
\end{myremark}

Weinstein's remarkable observation was that the symplectic realization condition can be locally rephrased as a Maurer-Cartan structure equation. This equation lives in the space $\Omega^*(V^*;C^\infty(\mathcal{O}))$ consisting of differential forms with values in $C^\infty(\mathcal{O})$, where a 1-form $\phi\in\Omega^1(V^*;C^\infty(\mathcal{O}))$ is smooth if the map $\mathcal{O}\times V^* \to \mathbb{R},\; (x,\xi)\mapsto \phi_\xi(\zeta)(x)$, is smooth for all $\zeta\in V^*$, and similarly for higher degree forms. This space is equipped with the de Rham differential $d$ defined as usual, and a bracket $\{,\}$ defined as in \eqref{eqn:bracketforms} (with the Lie bracket replaced by the Poisson bracket); thus, one can make sense of the Maurer-Cartan 2-form associated with a 1-form $\phi\in\Omega^1(V^*;C^\infty(\mathcal{O}))$:
\begin{equation*}
 \text{MC}_\phi:=d\phi+\frac{1}{2}\{\phi,\phi\}\in\Omega^2(V^*;C^\infty(\mathcal{O})).
\end{equation*}

Weinstein proceeded to show that if $(\mathcal{O},\pi)$ is a Poisson manifold, then the 1-form given by \eqref{eqn:solutionpoissonintro} satisfies the Maurer-Cartan structure equation, thus proving the existence of local symplectic realizations. Of course, the fact that the Poisson bracket satisfies the Jacobi identity is used in the proof, but its precise role is somewhat obscure, appearing as a ``mere step'' in the calculation (see \cite{Weinstein1983}, p. 547).

The following two theorems shed further light on the role of the Jacobi identity as an obstruction in this problem. The first of the two theorems, an analog of ``Step 1'' of the previous section, demonstrates how close $d\tilde{\phi}$ induced by \eqref{eqn:solutionpoissonintro} is from being a symplectic realization, regardless of the Jacobi identity. 

\begin{mytheorem}
Let $(\mathcal{O},\pi)$ be a pre-Poisson manifold. The 1-form $\phi\in \Omega^1(V^*;C^\infty(\mathcal{O}))$ defined by \eqref{eqn:solutionpoissonintro} satisfies the equation
\begin{equation}
 (\text{MC}_\phi)_\xi(\xi,\zeta) = 0,\;\;\;\;\;\;\; \forall \xi,\zeta\in V^*. 
 \label{eqn:equation2poisson}
\end{equation}
Moreover, it is the unique solution of \eqref{eqn:equation2poisson} together with the boundary condition
\begin{equation}
 \phi_\xi(\xi) = \xi,\;\;\;\;\; \forall \xi\in V^*.
 \label{eqn:equation1poisson}
\end{equation}
\label{theorem:uniquenesspoisson}
\end{mytheorem}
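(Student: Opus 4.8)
The plan is to mimic exactly the structure of the proof of Theorem~\ref{theorem:uniqueness}, exploiting the fact that the space $\Omega^*(V^*;C^\infty(\mathcal{O}))$ is formally a copy of the Lie-algebra situation in which the ``Lie algebra'' is now the infinite-dimensional $C^\infty(\mathcal{O})$ equipped with the Poisson bracket $\{,\}$, and the ``exponential/adjoint'' data is replaced by the flow $\varphi_{X_\xi}$ of the Hamiltonian vector field. The key observation making the analogy precise is that for a linear functional $\xi\in V^*$, acting by $\{\xi,-\}$ on $C^\infty(\mathcal{O})$ is exactly $X_\xi$, and the flow-pullback $(\varphi^{-t}_{X_\xi})^*$ plays the role that $e^{-t\,\mathrm{ad}_x}$ played before; indeed $\frac{d}{dt}(\varphi^{-t}_{X_\xi})^*\zeta = -(\varphi^{-t}_{X_\xi})^*\{\xi,\zeta\}$, which is the infinitesimal relation that drove the whole computation in Step~1.

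First I would verify the boundary condition: plugging $\zeta=\xi$ into \eqref{eqn:solutionpoissonintro} gives $\phi_\xi(\xi)=\int_0^1 (\varphi^{-t}_{X_\xi})^*\xi\,dt$, and since $\xi$ is constant along its own Hamiltonian flow (as $\{\xi,\xi\}=0$ by antisymmetry), each integrand equals $\xi$, so $\phi_\xi(\xi)=\xi$ and \eqref{eqn:equation1poisson} holds. Next, for uniqueness I would run the Step~1 argument verbatim: by the same linearity/continuity remark, \eqref{eqn:equation2poisson} and \eqref{eqn:equation1poisson} are equivalent to their rescaled versions along rays $t\xi$; then I would compute $(d\phi)_{t\xi}(\xi,t\zeta)$ using the pullback-by-$f(t,\epsilon)=t(\xi+\epsilon\zeta)$ trick, obtaining $\frac{\partial}{\partial t}(\phi_{t\xi}(t\zeta))-\zeta$ exactly as before (the boundary condition \eqref{eqn:equation1poisson} is used to evaluate the $\epsilon$-derivative term), and compute $\tfrac12\{\phi,\phi\}_{t\xi}(\xi,t\zeta)=\{\phi_{t\xi}(\xi),\phi_{t\xi}(t\zeta)\}=\{\xi,\phi_{t\xi}(t\zeta)\}=X_\xi(\phi_{t\xi}(t\zeta))$. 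This reduces \eqref{eqn:equation2poisson} to the transport ODE $\frac{\partial}{\partial t}(\phi_{t\xi}(t\zeta))+X_\xi(\phi_{t\xi}(t\zeta))=\zeta$, equivalently $\frac{\partial}{\partial t}\bigl((\varphi^t_{X_\xi})^*\phi_{t\xi}(t\zeta)\bigr)=(\varphi^t_{X_\xi})^*\zeta$, whose solution with the correct initial value is \eqref{eqn:solutionpoissonintro}. Reading these steps in reverse, together with the verified boundary condition, gives existence.

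The main obstacle I expect is the identification $\tfrac12\{\phi,\phi\}_\xi(\xi,\zeta)=X_\xi(\phi_\xi(\zeta))$ and, more broadly, making sure the Lie-algebra manipulations are legitimate in the present infinite-dimensional setting where the ``bracket'' $\{\phi_{t\xi}(\xi),-\}$ is a differential operator on $C^\infty(\mathcal{O})$ rather than a finite-dimensional linear map. Concretely, I must check that $\{\xi,-\}=X_\xi$ as operators (which requires that $\xi$ be a \emph{linear} functional so that $\{\xi,\zeta\}=\pi(d\xi,d\zeta)=X_\xi(\zeta)$), and that the ``integrating factor'' $e^{t\,\mathrm{ad}_x}$ is correctly replaced by the flow pullback $(\varphi^t_{X_\xi})^*$, which solves $\frac{d}{dt}(\varphi^t_{X_\xi})^*=(\varphi^t_{X_\xi})^*X_\xi=(\varphi^t_{X_\xi})^*\{\xi,-\}$. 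Once these operator identities are in place, the antisymmetry of $\{,\}$ alone (no Jacobi identity) suffices, so everything goes through for a merely pre-Poisson $\pi$, exactly as the theorem claims.
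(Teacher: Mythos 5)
Your proposal is correct and follows essentially the same route as the paper: the paper's own proof of Theorem \ref{theorem:uniquenesspoisson} simply declares it to be the proof of Theorem \ref{theorem:uniqueness} with $\mathfrak{g}$ replaced by $V^*$, the Lie bracket by the Poisson bracket, and $e^{t\,\mathrm{ad}_x}$ by the flow pullback $(\varphi^{t}_{X_\xi})^*$, which is precisely the dictionary you set up and then execute. The operator identities you flag as potential obstacles (in particular $\{\xi,-\}=X_\xi$ and the integrating-factor property of $(\varphi^{t}_{X_\xi})^*$) are exactly the adjustments the paper alludes to when it says that ``derivatives of matrix valued functions of $t$ become derivatives of flows.''
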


\begin{proof}
The proof is essentially the same as the proof of Theorem \ref{theorem:uniqueness}. One must only make the following adjustments: 
\begin{itemize}
 \item $\mathfrak{g}$ with $V^*$ (and acccordingly $x,y$ with $\xi,\zeta$),
 \item the Lie bracket $[,]$ with the Poisson bracket $\{,\}$,
 \item $e^{t\;\text{ad}_{\xi}}$ with $(\varphi^t_{X_\xi})^*$,
\end{itemize}
and while making the last of the three adjustments, one notes that derivatives of matrix valued functions of $t$ become derivatives of flows. 
\end{proof}

The next theorem, an analog of ``Step 2'' of the previous section, gives an explicit relation between $\text{Jac}$ and $\text{MC}_\phi$ which translates into a precise relation between the failure of the Poisson equation and the failure of $d\tilde{\phi}$ from being a symplectic realization. Of course, it follows that if the Poisson equation is satisfied, then $d\tilde{\phi}$ is a symplectic realization.

\begin{mytheorem}
\label{theorem:jacobipoisson}
Let $\phi\in\Omega^1(V^*;C^\infty(\mathcal{O}))$ be a solution to \eqref{eqn:equation2poisson} and \eqref{eqn:equation1poisson}, then 
\begin{equation*}
 \text{Jac}=0 \;\;\; \Longleftrightarrow \;\;\; MC_\phi=0,
\end{equation*}
or more precisely,
\begin{equation*}
\begin{split}
&\text{Jac}(\xi,\zeta,\eta) = - 3 \frac{d}{dt}  (\text{MC}_\phi)_{t\xi}(\zeta,\eta) \big|_{t=0}, \\
&(\text{MC}_\phi)_{\xi}(\zeta,\eta) =  - \int_0^{1} (\varphi^{t-1}_{X_\xi})^*\; \text{Jac}(\xi,\phi_{t\xi}(t\zeta),\phi_{t\xi}(t\eta)) \; dt . 
\end{split}
\end{equation*} 
\end{mytheorem}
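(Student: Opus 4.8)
The plan is to transcribe the proof of Theorem~\ref{theorem:jacobi} line for line, applying the same dictionary that made Theorem~\ref{theorem:uniquenesspoisson} go through: replace $\mathfrak{g}$ by $V^*$, the elements $x,y,z$ by $\xi,\zeta,\eta$, the Lie bracket $[\cdot,\cdot]$ by the Poisson bracket $\{\cdot,\cdot\}$, the operator $\text{ad}_x$ by the derivation $\{\xi,\cdot\}$ (which on functions is $\mathcal{L}_{X_\xi}$), and the matrix exponential $e^{t\,\text{ad}_x}$ by the flow pullback $(\varphi^t_{X_\xi})^*$. As before, I would fix $\xi,\zeta,\eta\in V^*$ and compute $(d\,\text{MC}_\phi)_{t\xi}(\xi,t\zeta,t\eta)$ in two different ways, for $t\in(0,1)$.

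First I would pull back along $f(t,\epsilon,\epsilon')=t(\xi+\epsilon\zeta+\epsilon'\eta)$; the terms of $df^*\text{MC}_\phi$ carrying a slot equal to $\xi$ vanish by \eqref{eqn:equation2poisson}, leaving $\frac{d}{dt}(\text{MC}_\phi)_{t\xi}(t\zeta,t\eta)$. Second I would use that $d$ is a graded derivation of the form-level bracket $\{\cdot,\cdot\}$, a fact needing only the $\mathbb{R}$-bilinearity of $\{\cdot,\cdot\}$ on $C^\infty(\mathcal{O})$ and not the Jacobi identity, so that (together with $dd\phi=0$) one has $d\,\text{MC}_\phi = d\tfrac{1}{2}\{\phi,\phi\}=\{d\phi,\phi\}$. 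Expanding the three shuffle terms and substituting the relations $(d\phi)_{t\xi}(\xi,t\zeta)+\{\phi_{t\xi}(\xi),\phi_{t\xi}(t\zeta)\}=0$ coming from \eqref{eqn:equation2poisson}, together with $\phi_{t\xi}(\xi)=\xi$ coming from \eqref{eqn:equation1poisson}, should collect the remaining brackets into the Poisson Jacobiator and yield $-\{\xi,(\text{MC}_\phi)_{t\xi}(t\zeta,t\eta)\}-\text{Jac}(\xi,\phi_{t\xi}(t\zeta),\phi_{t\xi}(t\eta))$. Equating the two computations gives the first-order ODE $\text{Jac}(\xi,\phi_{t\xi}(t\zeta),\phi_{t\xi}(t\eta)) = -\big(\tfrac{d}{dt}+\{\xi,\cdot\}\big)(\text{MC}_\phi)_{t\xi}(t\zeta,t\eta)$.

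From here the remainder is pure bookkeeping. The integrating factor is now the flow pullback: using $\frac{d}{dt}(\varphi^t_{X_\xi})^* = (\varphi^t_{X_\xi})^*\mathcal{L}_{X_\xi}$ and $\mathcal{L}_{X_\xi}g=\{\xi,g\}$ for $g\in C^\infty(\mathcal{O})$, the ODE rewrites as $(\varphi^t_{X_\xi})^*\text{Jac}(\xi,\phi_{t\xi}(t\zeta),\phi_{t\xi}(t\eta)) = -\frac{d}{dt}\big((\varphi^t_{X_\xi})^*(\text{MC}_\phi)_{t\xi}(t\zeta,t\eta)\big)$, the exact analog of \eqref{eqn:jacobiatordiff}. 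Integrating from $0$ to $1$ and applying $(\varphi^{-1}_{X_\xi})^*$, via $(\varphi^{-1}_{X_\xi})^*(\varphi^t_{X_\xi})^*=(\varphi^{t-1}_{X_\xi})^*$ and the vanishing of $(\text{MC}_\phi)_0$ established in the proof of Theorem~\ref{theorem:uniquenesspoisson}, produces the second displayed formula. Multiplying instead by $1/t^2$, taking $t\to 0$, and again using $(\text{MC}_\phi)_0(\zeta,\eta)=0$ together with $(\varphi^t_{X_\xi})^*\to\text{id}$, produces the first.

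The one genuinely new point, and the only place I expect to pause, is exactly where the matrix computation of Theorem~\ref{theorem:jacobi} is replaced by flow identities: one must confirm that the infinitesimal generator of $(\varphi^t_{X_\xi})^*$ acting on the $C^\infty(\mathcal{O})$-valued quantity $(\text{MC}_\phi)_{t\xi}(t\zeta,t\eta)$ is precisely the derivation $\{\xi,\cdot\}$, so that it combines with $\frac{d}{dt}$ into a total $t$-derivative of the pullback. This is the \emph{``derivatives of matrix valued functions become derivatives of flows''} remark from the proof of Theorem~\ref{theorem:uniquenesspoisson}, and it is the only ingredient that does not transcribe letter-for-letter from the Lie algebra case. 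Everything else is identical, and in particular no Poisson relation is invoked anywhere before the two identities are in hand, so the equivalence $\text{Jac}=0\Leftrightarrow \text{MC}_\phi=0$ drops out as a formal consequence just as in Theorem~\ref{theorem:jacobi}.
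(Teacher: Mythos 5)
Your proposal follows exactly the paper's route: the paper's own proof of this theorem is precisely the dictionary transcription of Theorem \ref{theorem:jacobi} that you describe, and your two computations of $(d\,\text{MC}_\phi)_{t\xi}(\xi,t\zeta,t\eta)$, the integrating-factor step with $(\varphi^t_{X_\xi})^*$ in place of $e^{t\,\text{ad}_x}$, and the final integration and limit are all correct. There is, however, one genuine (if small) omission in your last claim that the equivalence ``drops out as a formal consequence just as in Theorem \ref{theorem:jacobi}.'' In the Lie algebra case the Jacobiator lives in $\mbox{Hom}(\Lambda^3\mathfrak{g},\mathfrak{g})$, so the identity \eqref{eqn:jacobiator} evaluated on all triples already gives $\text{Jac}=0$. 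Here, by contrast, $\text{Jac}$ is the Jacobiator of the Poisson bracket on all of $C^\infty(\mathcal{O})$ (equivalently, the Poisson equation $[\pi,\pi]=0$), while your first displayed identity only yields its vanishing on triples of \emph{linear} functions $\xi,\zeta,\eta\in V^*$. To obtain the implication $\text{MC}_\phi=0\Rightarrow\text{Jac}=0$ you must add the observation, which the paper makes explicitly: by the Leibniz identity the Jacobiator is a derivation in each argument, hence tensorial (it depends at each point only on the differentials of its arguments), and since differentials of linear functions span every cotangent space, its vanishing on linear functions forces $\text{Jac}\equiv 0$. With that one sentence added, your argument is complete and coincides with the paper's.
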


\begin{proof}
The proof is essentially the same as the proof of Theorem \ref{theorem:jacobi} after making the necessary adjustments as in the proof of the previous theorem, and using the fact that by the Leibniz identity, the vanishing of the Jacobiator on linear functions implies that it vanishes. 
\end{proof}

\begin{myremark}
\label{remark:linearpoisson}
 Theorems \ref{theorem:uniqueness} and \ref{theorem:jacobi} are in fact special cases of theorems \ref{theorem:uniquenesspoisson} and \ref{theorem:jacobipoisson}. Recall that a linear Poisson structure on the vector space $\mathfrak{g}^*$ is a Poisson bracket on $C^\infty(\mathfrak{g}^*)$ satisfying the property that it restricts to a Lie bracket on the linear functions $\mathfrak{g}\subset C^\infty(\mathfrak{g}^*)$. This defines a one-to-one correspondence between linear Poisson structures on $\mathfrak{g}^*$ and Lie algebra structures on $\mathfrak{g}$. In the case of linear Poisson structures, the Hamiltonian vector field on $\mathfrak{g}^*$ associated with an element $x\in\mathfrak{g}=(\mathfrak{g}^*)^*$ is simply the transpose $(\text{ad}_x)^*$ of the linear map $\text{ad}_x:\mathfrak{g}\to\mathfrak{g}$. The flow of $(\text{ad}_x)^*$ is the transpose of the linear map $e^{t\;\text{ad}_x}$, and the pullback by the flow is precisely $e^{t\;\text{ad}_x}$. This implies that the solution \eqref{eqn:solutionpoissonintro} takes values in $\mathfrak{g}\subset C^\infty(\mathfrak{g}^*)$, and it follows that theorems \ref{theorem:uniquenesspoisson} and \ref{theorem:jacobipoisson} for linear Poisson structures coincide with theorems \ref{theorem:uniqueness} and \ref{theorem:jacobi}. 
\end{myremark}

\section{The Maurer-Cartan structure equation of a Lie algebroid}
\label{section:algebroid}

In this section, we generalize our method from the Lie algebra case to the Lie algebroid case. We will begin by recalling the basic definitions and discussing the realization problem for Lie algebroids, after which we will state and prove theorems \ref{theorem:uniquenessalgebroid} and \ref{theorem:jacobialgebroid} which generalize theorems \ref{theorem:uniqueness} and \ref{theorem:jacobi}.

\begin{mydef}
 A \textbf{pre-Lie algebroid} $A\xrightarrow{\pi} M$ is a vector bundle $A$ over $M$ equipped with a vector bundle map (the `anchor') $\rho:A\to TM$  and an antisymmetric bilinear map (the `bracket') $[\cdot,\cdot]:\Gamma(A)\times\Gamma(A)\to\Gamma(A)$ satisfying,
\begin{equation*}
 \begin{split}
  & [\alpha,f\beta] = f[\alpha,\beta] + \mathcal{L}_{\rho(\alpha)}(f) \beta \;\;\;\;\;\; \forall \alpha,\beta\in\Gamma(A),\; f\in C^\infty(M),\\
  & \rho([\alpha,\beta]) = [\rho(\alpha),\rho(\beta)] ,\;\;\;\;\;\;\;\;\;\;\;\;\;\;\; \forall \alpha,\beta\in\Gamma(A).
 \end{split}
\end{equation*}
A pre-Lie algebroid $A\to M$ is called a \textbf{Lie algebroid} if it further satisfies the Jacobi identity
\begin{equation*}
 [[\alpha,\beta],\gamma] + [[\beta,\gamma],\alpha] + [[\gamma,\alpha],\beta] = 0,\;\;\;\;\; \forall \alpha,\beta,\gamma\in\Gamma(A).
\end{equation*}
\end{mydef}

Associated with a pre-Lie algebroid is the \textbf{Jacobiator} tensor $\text{Jac}\in \text{Hom}(\Lambda^3 A,A)$, defined at the level of sections by
\begin{equation*}
 \text{Jac}(\alpha,\beta,\gamma) = [[\alpha,\beta],\gamma] + [[\beta,\gamma],\alpha] + [[\gamma,\alpha],\beta], \;\;\;\;\; \forall \alpha,\beta,\gamma\in \Gamma(A),
\end{equation*}
and easily checked to be $C^\infty(M)$-linear in all slots. 

The notions of $A$-connections, $A$-paths, geodesics and infinitesimal flows that appear in the context of Lie algebroids remain unchanged when we give up on the Jacobi identity and pass to pre-Lie algebroids. We will assume familiarity with these notions, and otherwise refer the reader to the appendix (and to \cite{crainic2003-1} for more details). 

Let $A\to M$ be a pre-Lie algebroid equipped with an $A$-connection $\overline{\nabla}$. To every point $a\in A$ we associate the unique maximal geodesic $g_a:I_a\to A$ that satisfies $g_a(0)=a$. We denote its base curve by $\gamma_a:I_a\to M$. Let $A_0\subset A$ be a neighborhood of the zero-section such that $g_a$ is defined up to at least time $1$ for all $a\in A_0$. On $A_0$ we have the exponential map $\text{exp}: A_0 \to A,\; a\mapsto g_a(1)$, and the target map $\tau = \pi\circ\text{exp} : A_0\to M$. Let $\Omega^*_\pi(A_0;\tau^*A)$ be the space of foliated differential forms (foliated with respect to the foliation by $\pi$-fibers) with values in $\tau^*A$. Throughout this section we will use the canonical identification between the vertical bundle of $A_0$ and the pullback of $A$ to $A_0$, i.e. $T_aA_0\cong A_x$ for all $a\in (A_0)_x$. Thus, given a 1-form $\phi\in \Omega^1_{\pi}(A_0;\tau^*A)$, we will write $\phi_a(b)$ with $a\in (A_0)_x,\;b\in A_x$. 

A 1-form $\phi\in \Omega^1_{\pi}(A_0;\tau^*A)$ is said to be \textbf{anchored} if $\rho\circ\phi=d\tau$. Given a vector bundle connection $\nabla:\mathfrak{X}(M)\times \Gamma(A)\to \Gamma(A)$, we define the \textbf{Maurer-Cartan 2-form} associated with an anchored 1-form $\phi\in\Omega^1_{\pi}(A_0;\tau^*A)$ to be
\begin{equation*}
 \text{MC}_\phi := d_{\tau^*\nabla}\phi + \frac{1}{2}[\phi,\phi]_\nabla\in\Omega^2_\pi(A_0;\tau^*A).
\end{equation*}
The differential-like map $d_{\tau^*\nabla}$ and bracket on $\Omega^*_\pi(A_0;\tau^*A)$ are defined in the usual way (see appendix). The anchored condition implies that $\text{MC}_\phi$ is independent of the choice of connection (Proposition \ref{prop:MCindependence}). The auxiliary connection $\nabla$ should not be confused with the $A$-connection $\overline{\nabla}$, which is part of the data we fix.

Of course, given any open subset $U\subset A_0$, we have the space of forms $\Omega^*_\pi(U;\tau^*A)$ equipped with a differential-like operator and a bracket in the same manner, and anchored 1-forms have associated Maurer-Cartan 2-forms. The \textbf{realization problem for Lie algebroids} can now be stated: find an anchored 1-form $\phi\in\Omega^1_\pi(U;\tau^*A)$ on some open neighborhood of the zero-section of $A_0$ such that $\phi$ is pointwise an isomorphism and satisfies the \textbf{Maurer-Cartan structure equation}:
\begin{equation}
 \text{MC}_\phi = 0.
 \label{eqn:maurercartanalgebroid}
\end{equation}

\begin{myremark}
	A solution of the Maurer-Cartan structure equation can also be interpreted as a Lie algebroid map: a 1-form $\phi\in\Omega^1_\pi(A_0;\tau^*A)$ can be viewed as a vector bundle map from the Lie algebroid $T^\pi A_0\to A_0$ (the vertical bundle, a Lie subalgebroid of $TA_0\to A_0$) to the Lie algebroid $A\to M$ covering $\tau$, the anchored condition on $\phi$ is equivalent to the vector bundle map commuting with the anchors, and $\phi$ satisfies the Maurer-Cartan structure equation if and only if the vector bundle map is a Lie algebroid map (see \cite{Crainic2011-1} or \cite{Fernandes2014} for more details). From this point of view, the Maurer-Cartan structure equation is a special case of the \textit{generalized Maurer-Cartan equation} for vector bundle maps between Lie algebroids which commute with the anchors studied in \cite{Fernandes2014} (section 3.2).
\end{myremark}

As in the case of Lie algebras (see introduction), one can find a solution to the realization problem by assuming that the Lie algebroid integrates to a Lie groupoid and pulling back the canonical Maurer-Cartan 1-form on the Lie groupoid by the exponential map. The resulting formula will not depend on the Lie groupoid, and one can verify directly that the formula is indeed a solution, and, therefore, not have to require that the Lie algebroid be integrable.   

Let us explain this in more detail. Let $\mathcal{G} \rightrightarrows M$ be a Lie groupoid with source/target map $s/t$. The canonical Maurer-Cartan 1-form $\phi_{\text{MC}}\in\Omega^1_s(\mathcal{G};t^*A)$ is a foliated differential 1-form on $\mathcal{G}$ (foliated with respect to the foliation by $s$-fibers) with values in $t^*A$. It is defined precisely as in the case of Lie groups:
\begin{equation}
 (\phi_{\text{MC}})_g = (dR_{g^{-1}})_g,\;\;\;\; \forall g\in \mathcal{G},
 \label{eqn:groupoidmcform}
\end{equation}
the difference being that the right multiplication map $R_{g^{-1}}$ is only defined on $s^{-1}(s(g))$. For this reason, the resulting form is foliated. The Maurer-Cartan form satisfies the anchored property $\rho((\phi_{\text{MC}})_g(X)) = (dt)_g(X)$ and the Maurer-Cartan structure equation $d_{t^*\nabla}\phi_{\text{MC}} + \frac{1}{2}[\phi_{\text{MC}},\phi_{\text{MC}}]_\nabla = 0$ (for more details, see \cite{Fernandes2014}, section 4).

The exponential map $\text{Exp}:=\text{Exp}_{\overline{\nabla}}:A_0\to\mathcal{G}$ on a Lie groupoid requires a choice of an $A$-connection $\overline{\nabla}$ on $A$, where $A_0$ is as above. Such a choice induces a normal connection on each $s$-fiber and the exponential map is then defined in the usual way. This choice of an $A$-connection also gives rise to an exponential on the Lie algebroid, as we saw above, and the two satisfy the following relations:
\begin{gather}
\text{exp}(a)=(dR_{\text{Exp}(a)^{-1}})_{\text{Exp}(a)}\frac{d}{dt} \text{Exp}(ta)\big|_{t=1}, \label{eqn:Expexprelation1} \\
\pi\circ\text{exp} = t\circ \text{Exp}, \label{eqn:Expexprelation2} \\
\pi = s\circ \text{Exp}. \notag
\end{gather}
If we pull back the Maurer-Cartan form by the exponential map, the resulting form will be an element of $\Omega^1_\pi(A_0;\tau^*A)$. It will be anchored as a result of \eqref{eqn:Expexprelation2}. It is now not difficult to verify that the fact that $\phi_{\text{MC}}$ satisfies the Maurer-Cartan structure equation on the Lie groupoid implies that $\text{Exp}^*\phi_{\text{MC}}$ satisfies the Maurer-Cartan structure equation on the Lie algebroid, i.e. satisfies \eqref{eqn:maurercartanalgebroid}.

In the following two theorems we will obtain a solution by taking a different path, namely by generalizing our method from section \ref{section:liealgebra}. The first theorem is the generalization of ``Step 1'': a weaker version of the realization problem which admits a unique solution for any pre-Lie algebroid. The theorem gives an explicit formula for a solution to the realization problem of Lie algebroids. In Corollary \ref{cor:groupoidmcformformula} we show that our solution coincides with $\text{Exp}^*\phi_{\text{MC}}$.

\begin{mytheorem}
Let $A\to M$ be a pre-Lie algebroid equipped with an $A$-connection $\overline{\nabla}$. The equations
\begin{align}
 & (\text{MC}_\phi)_a(a,b) = 0,\;\;\;\;\;\;\;\;\;\;\;\;\;\;\; \forall x\in M,\; a\in (A_0)_x,\;b\in A_x,  \label{eqn:equation2algebroid} \\
 & \rho \circ \phi = d\tau, \label{eqn:equation3algebroid}
\end{align}
admit a solution in $\Omega^1_\pi(A_0;\tau^*A)$ which is pointwise an isomorphism on a small enough neighborhood of the zero section of $A_0$. Moreover, if we impose the boundary condition
\begin{equation}
 \phi_a(a) = \text{exp}(a),\;\;\;\;\;\forall a\in A_0, 
 \label{eqn:equation1algebroid} 
\end{equation}
then the solution is unique and can be described as follows: let $\xi:[0,1]\times(-\delta,\delta)\times M \to A$ be a smooth map such that $\xi^t_\epsilon = \xi (t,\epsilon,\cdot)$ is a section of $A$ and $\xi_\epsilon^t(\gamma_{a+\epsilon b}(t)) = g_{a+\epsilon b}(t)$ for all $(t,\epsilon)\in [0,1]\times(-\delta,\delta)$, and let $\psi_{\xi_0}$ be the infinitesimal flow associated with the time dependent section $\xi_0$ (see appendix). The solution is given by
\begin{equation}
\phi_a(b)=\int_0^1 \psi^{1,t}_{\xi_0} \frac{d}{d\epsilon}\Big|_{\epsilon=0} \xi_\epsilon^t(\gamma_a(t))  \;dt.
\label{eqn:solutionalgebroid}
\end{equation}
\label{theorem:uniquenessalgebroid}
\end{mytheorem}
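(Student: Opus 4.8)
The plan is to mimic the proof of Theorem \ref{theorem:uniqueness} essentially line by line, with the fiber-scaling $t\mapsto ta$ playing the role of the ray $t\mapsto tx$, the geodesic base curve $\gamma_a$ replacing the straight ray in $\mathfrak{g}$, and the infinitesimal flow $\psi_{\xi_0}$ replacing the matrix exponential $e^{t\,\text{ad}_x}$. As before, I would prove uniqueness by showing that any solution of \eqref{eqn:equation2algebroid}, \eqref{eqn:equation3algebroid} and \eqref{eqn:equation1algebroid} is forced to equal \eqref{eqn:solutionalgebroid}, and then obtain existence by reading the same computation backwards. Since $\text{MC}_\phi$ is independent of the auxiliary connection $\nabla$ (Proposition \ref{prop:MCindependence}), I am free to choose $\nabla$ conveniently throughout.

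First I would fix $a\in (A_0)_x$ and $b\in A_x$ and restrict $\text{MC}_\phi$ to the vertical two-parameter family $F(t,\epsilon)=t(a+\epsilon b)$, which stays inside the single fiber $(A_0)_x$, so that its $t$- and $\epsilon$-derivatives are vertical and may be fed into the foliated form $\text{MC}_\phi$. Under the identification $T_cA_0\cong A_x$ these derivatives become $a+\epsilon b$ and $tb$; hence, by bilinearity of the form in its two vertical arguments, equation \eqref{eqn:equation2algebroid} specializes to $(\text{MC}_\phi)_{ta}(a,tb)=0$ for all $t\in(0,1)$, exactly the analog of \eqref{eqn:MCrestricted}. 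The crucial simplification comes from the boundary condition \eqref{eqn:equation1algebroid}: since $\partial_t F = a+\epsilon b$, it gives $\phi_{F}(\partial_t F)=\tfrac1t\,\text{exp}(t(a+\epsilon b))$, which by the homogeneity of geodesics equals $g_{a+\epsilon b}(t)=\xi^t_\epsilon(\gamma_{a+\epsilon b}(t))$. This is precisely where the map $\xi$ enters.

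With this in hand I would expand $\text{MC}_\phi=d_{\tau^*\nabla}\phi+\tfrac12[\phi,\phi]_\nabla$ on $(\partial_tF,\partial_\epsilon F)$ just as in Theorem \ref{theorem:uniqueness}: the covariant differential contributes $\nabla_{\partial_t}\big(\phi_{ta}(tb)\big)$ together with the $\epsilon$-variation of $g_{a+\epsilon b}(t)$ (the analog of the term ``$-y$''), while the connection-twisted bracket, using the anchored condition \eqref{eqn:equation3algebroid} and $\phi_F(\partial_tF)=g_{a+\epsilon b}(t)$, contributes $[g_a(t),\phi_{ta}(tb)]$ at $\epsilon=0$. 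Writing $\beta(t):=\phi_{ta}(tb)$ and reorganizing the connection-dependent terms, the three pieces assemble into the linear first-order equation
\begin{equation*}
 \nabla_{\partial_t}\beta(t) + [g_a(t),\beta(t)] = \tfrac{d}{d\epsilon}\big|_{\epsilon=0}\xi^t_\epsilon(\gamma_a(t)),
\end{equation*}
the algebroid counterpart of \eqref{eqn:equationbeta}. The operator $\nabla_{\partial_t}+[g_a(t),\,\cdot\,]$ on the left is exactly the infinitesimal generator of $\psi_{\xi_0}$ along $\gamma_a$, so $\psi^{1,t}_{\xi_0}$ is the integrating factor and the left-hand side becomes $\tfrac{d}{dt}\big(\psi^{1,t}_{\xi_0}\beta(t)\big)$. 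Integrating from $0$ to $1$ and using $\beta(0)=0$ (the value at the zero section, the analog of \eqref{eqn:equationsatorigin}) yields $\phi_a(b)=\beta(1)=\int_0^1\psi^{1,t}_{\xi_0}\tfrac{d}{d\epsilon}|_{\epsilon=0}\xi^t_\epsilon(\gamma_a(t))\,dt$, which is \eqref{eqn:solutionalgebroid}. Reversing the steps shows this $\phi$ solves the three equations, and evaluating at the zero section---where geodesics are constant, $\psi_{\xi_0}=\text{id}$ and the integrand reduces to $b$---gives $\phi_{0_x}=\text{id}$, so $\phi$ is a pointwise isomorphism near the zero section.

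The main obstacle, and where genuine work is required beyond the Lie algebra case, is the covariant-derivative bookkeeping in the third step. Because the base point $\gamma_{a+\epsilon b}(t)$ now genuinely moves with $\epsilon$ (it was frozen over a point in the Lie algebra case), one must carefully separate the covariant variation $\nabla_{\partial_\epsilon}g_{a+\epsilon b}(t)$ from the honest fiberwise section-variation $\frac{d}{d\epsilon}|_{\epsilon=0}\xi^t_\epsilon(\gamma_a(t))$ that appears in \eqref{eqn:solutionalgebroid}, keeping track of the extra terms $\nabla_{(\cdot)}\xi^t_0$ produced by the moving base point and checking that these connection-dependent contributions from $d_{\tau^*\nabla}\phi$ cancel against those from $[\phi,\phi]_\nabla$ (as they must, by the connection-independence of $\text{MC}_\phi$). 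Equally delicate is verifying that the generator of $\psi_{\xi_0}$ really is $\nabla_{\partial_t}+[\xi^t_0,\,\cdot\,]$ in the connection-twisted sense---the property of infinitesimal flows that replaces the trivial identity $\frac{d}{dt}e^{t\,\text{ad}_x}=\text{ad}_x\,e^{t\,\text{ad}_x}$---together with the well-definedness of \eqref{eqn:solutionalgebroid}, namely its smoothness and independence of the auxiliary choices of $\xi$ and $\nabla$.
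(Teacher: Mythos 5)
Your proposal follows the paper's proof essentially step for step: the same restriction to the vertical family $f(t,\epsilon)=t(a+\epsilon b)$, the same use of the boundary condition together with geodesic homogeneity to identify $\phi_{ta}(a)$ with $g_a(t)$, the same reduction of the restricted Maurer--Cartan equation to a first-order linear equation along $\gamma_a$ solved with $\psi^{1,t}_{\xi_0}$ as integrating factor, and the same reversal of the computation for existence. The only caveat is notational: the pointwise bracket $[g_a(t),\beta(t)]$ has no intrinsic meaning on a Lie algebroid and must be unpacked via the time-dependent sections $\xi_0,\eta$, the torsion bracket $[\cdot,\cdot]_\nabla$, and the defining property of the infinitesimal flow --- precisely the bookkeeping you flag in your final paragraph and which the paper carries out explicitly.
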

\begin{proof}
Equation \eqref{eqn:equation1algebroid} implies that a solution $\phi$ is equal to the identity on the zero section of $A$ and thus pointwise an isomorphism on a small enough neighborhood of the zero section.

Let $\phi\in \Omega^1_\pi(A_0;\tau^* A)$ be a solution of \eqref{eqn:equation2algebroid}, \eqref{eqn:equation3algebroid} and \eqref{eqn:equation1algebroid}. In this proof we show that $\phi$ must be given by  \eqref{eqn:solutionalgebroid}. The remaining arguments are precisely as in the proof of Theorem \ref{theorem:uniqueness}.

By  \eqref{eqn:equation1algebroid}, $\phi_a(a) = \text{exp}(a) = g_a(1)$ for all $a\in A_0$. This implies that $\phi_{ta}(ta) = g_{ta}(1) = tg_a(t)$, by using  \eqref{eqn:geodesiclinearity}, and by linearity,
\begin{equation}
 \phi_{ta}(a) = g_a(t)
 \label{eqn:equation2algebroid_consequence}
\end{equation}
for all $t\in (0,1)$. Equation \eqref{eqn:equation2algebroid_consequence} is thus equivalent to  \eqref{eqn:equation1algebroid}.

Fix $x\in M,\; a\in (A_0)_x$ and $b\in A_x$. Let $\nabla$ be a vector bundle connection on $A$. Equation \eqref{eqn:equation2algebroid} implies that
\begin{equation}
 (d_{\tau^*\nabla}\phi + \frac{1}{2}[\phi,\phi]_\nabla)_{ta}(a,tb) = 0,
 \label{eqn:MCrestrictedalgebroid}
\end{equation}
for all $t\in (0,1)$. We will compute this equation for a fixed $t'\in (0,1)$. 

To compute $(d_{\tau^*\nabla}\phi)_{t'a}(a,t'b)$, consider the map $f:(0,1)\times (-\delta,\delta) \to (A_0)_x,\; f(t,\epsilon) = t(a+\epsilon b)$. The composition $\tau \circ f$ restricted to $\epsilon=0$ is the curve $t\mapsto \tau(ta)$ which is precisely $\gamma_a$, the base curve of the geodesic $g_a$, and $\tau \circ f$ restricted to $t=t'$ is the curve $\gamma_\epsilon:(-\delta,\delta)\to M,\; \epsilon\mapsto \tau(t'(a+\epsilon b))$.   
\begin{equation*}
 \begin{split}
   (d_{\tau^*\nabla}\phi)_{t'a}(a,t'b) &= (f^*d_{\tau^*\nabla}\phi)_{(t',0)}( \frac{\partial}{\partial t},\frac{\partial}{\partial \epsilon}) = (d_{f^*\tau^*\nabla} f^*\phi)_{(t',0)}( \frac{\partial}{\partial t},\frac{\partial}{\partial \epsilon}) \\
   & = (f^*\tau^*\nabla)_{\frac{\partial}{\partial t}} (f^*\phi)(\frac{\partial}{\partial \epsilon})\Big|_{(t',0)} - (f^*\tau^*\nabla)_{\frac{\partial}{\partial \epsilon}} (f^*\phi)(\frac{\partial}{\partial t})\Big|_{(t',0)} \\
   & = (\nabla)_{\dot{\gamma}_a} \phi_{ta} (tb)\Big|_{t=t'} - (\nabla)_{\dot{\gamma}_\epsilon} g_{a+\epsilon b}(t')\Big|_{\epsilon=0}
 \end{split}
\end{equation*}
In the second equality we have used Lemma \ref{lemma:pullbackdifferential} to commute the pullback with $d_{\tau^*\nabla}$ and in the last equality we have used  \eqref{eqn:equation2algebroid_consequence} which is equivalent to  \eqref{eqn:equation1algebroid}. The two terms in the final expression are covariant derivatives of paths, which make sense because $\gamma_a$ is the base curve of the curve $t \mapsto \phi_{ta}(tb)$ and $\gamma_\epsilon$ is the base curve of $\epsilon\mapsto g_{a+\epsilon b}(t')$.

To compute $(\frac{1}{2}[\phi,\phi]_\nabla)_{t'a}(a,t'b)$, let $\xi$ be the map as in the statement of the theorem and let $\eta$ be a time dependent section of $A$ satisfying $\eta^t(\gamma_a(t)) = \phi_{ta}(tb)$. 
\begin{equation*}
 \begin{split}
   (\frac{1}{2}[\phi,\phi]_\nabla)_{t'a}(a,t'b) &= [ \xi^{t'}_0,\eta^{t'}]_\nabla(\gamma_a(t')) \\ 
   &= [ \xi^{t'}_0,\eta^{t'}](\gamma_a(t')) - \nabla_{\rho(\xi_0^{t'})} \eta^{t'} (\gamma_a(t')) + \nabla_{\rho(\eta^{t'})} \xi_0^{t'} (\gamma_a(t')) \\
   &= \frac{d}{dt}\Big|_{t=t'}\psi^{t',t}_{\xi_0} \eta^{t'}(\gamma_a(t)) - \nabla_{\dot{\gamma}_a} \eta^{t'} (\gamma_a(t')) + \nabla_{\dot{\gamma}_\epsilon} \xi_0^{t'} (\gamma_a(t'))
 \end{split}
\end{equation*}
In the last equality, we have used the defining property \eqref{eqn:infinitesimalflow} of the infinitesimal flow for the first term, $\rho(\xi_0^{t'}(\gamma_a(t'))) = \rho(g_a(t')) = \dot{\gamma}_a(t')$ for the second term, and 
\begin{equation*}
\begin{split}
 \rho(\eta^{t'}(\gamma_a(t'))) &= \rho(\phi_{t'a}(t'b)) = (d\tau)_{t'a}(t'b) = d(\pi\circ\text{exp})_{t'a}(t'b) = \frac{d}{d\epsilon}\Big|_{\epsilon=0}(\pi(exp(t'a+\epsilon t' b))) \\ 
 &= \frac{d}{d\epsilon}\Big|_{\epsilon=0}(\pi(g_{t'(a+\epsilon b)}(1))) = \frac{d}{d\epsilon}\Big|_{\epsilon=0}(\pi(t'g_{(a+\epsilon b)}(t'))) = \dot{\gamma}_\epsilon(0)
 \end{split}
\end{equation*}
for the third term, where we have used the anchored property \eqref{eqn:equation3algebroid} in the second equality.

Thus for $\phi$ that satisfies  \eqref{eqn:equation1algebroid},  \eqref{eqn:MCrestrictedalgebroid} is equivalent to
\begin{equation*}
 \frac{d}{dt}\Big|_{t=t'}\psi^{t',t}_{\xi_0} \eta^{t'}(\gamma_a(t)) + \frac{d}{dt}\Big|_{t=t'}\eta^t(\gamma_a(t'))  = \frac{d}{d\epsilon}\Big|_{\epsilon=0}\xi^{t'}_\epsilon(\gamma_a(t')),
\end{equation*}
where we have used the characterization \eqref{eqn:covariantderivative} of covariant derivatives of curves.

Applying $\psi^{1,t'}_{\xi_0}$ to both sides and using the product rule, the latter equation is equivalent to
\begin{equation*}
 \frac{d}{dt}\psi^{1,t}_{\xi_0} \eta^{t}(\gamma_a(t)) = \psi^{1,t}_{\xi_0} \frac{d}{d\epsilon}\Big|_{\epsilon=0}\xi^{t}_\epsilon(\gamma_a(t)).
\end{equation*}
Integrating $t'$ from $0$ to $1$, and using the definition of $\eta$ and the property $\psi^{1,1}_{\xi_0}=\text{id}$, we obtain  \eqref{eqn:solutionalgebroid}.
\end{proof}

\begin{mycorollary}
\label{cor:groupoidmcformformula}
The pullback of the canonical Maurer-Cartan form of a Lie groupoid by the exponential map $\text{Exp}^*\phi_{\text{MC}}$ is equal to the 1-form defined by \eqref{eqn:solutionalgebroid}.
\end{mycorollary}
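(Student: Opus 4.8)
The plan is to reduce everything to the uniqueness half of Theorem \ref{theorem:uniquenessalgebroid}. Since that theorem guarantees that \eqref{eqn:solutionalgebroid} is the \emph{unique} element of $\Omega^1_\pi(A_0;\tau^*A)$ satisfying \eqref{eqn:equation2algebroid}, \eqref{eqn:equation3algebroid} and \eqref{eqn:equation1algebroid}, it is enough to show that $\text{Exp}^*\phi_{\text{MC}}$ is a well-defined element of this space and satisfies the three conditions. Well-definedness I would read off from the relations preceding the theorem: the identity $\tau = t\circ\text{Exp}$ from \eqref{eqn:Expexprelation2} gives $\text{Exp}^*(t^*A)=\tau^*A$, so the pullback indeed takes values in $\tau^*A$, while $\pi = s\circ\text{Exp}$ shows that the $s$-foliated form $\phi_{\text{MC}}$ pulls back to a $\pi$-foliated form.

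Two of the three conditions come essentially for free. The anchored condition \eqref{eqn:equation3algebroid} was already noted to follow from \eqref{eqn:Expexprelation2}, and the discussion preceding the theorem established that $\text{Exp}^*\phi_{\text{MC}}$ solves the full Maurer-Cartan structure equation $\text{MC}_{\text{Exp}^*\phi_{\text{MC}}}=0$; condition \eqref{eqn:equation2algebroid} is just the restriction of this equation to argument pairs of the form $(a,b)$, so it holds automatically. The only genuine work is the boundary condition \eqref{eqn:equation1algebroid}, and this is where I expect the one real obstacle.

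To verify \eqref{eqn:equation1algebroid} I would first unwind the vertical identification $T_aA_0\cong A_x$ hidden in the notation $\phi_a(a)$: under it, the fiber element $a\in A_x$ corresponds to the tangent vector $\frac{d}{dt}\big|_{t=1}(ta)\in T_aA_0$. Pushing forward by $\text{Exp}$ and applying the definition \eqref{eqn:groupoidmcform} of $\phi_{\text{MC}}$ at the point $g=\text{Exp}(a)$ gives
\begin{equation*}
 (\text{Exp}^*\phi_{\text{MC}})_a(a) = (dR_{\text{Exp}(a)^{-1}})_{\text{Exp}(a)}\,\tfrac{d}{dt}\big|_{t=1}\text{Exp}(ta),
\end{equation*}
and the right-hand side is exactly $\text{exp}(a)$ by relation \eqref{eqn:Expexprelation1}. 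This is precisely \eqref{eqn:equation1algebroid}.

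The subtle point --- and the only place where care is needed --- is matching the vertical identification built into $\phi_a(a)$ with the groupoid-side differentiation in \eqref{eqn:Expexprelation1}; once the correct curve $t\mapsto\text{Exp}(ta)$ and its velocity at $t=1$ are pinned down, relation \eqref{eqn:Expexprelation1} closes the argument. With all three conditions in hand, uniqueness in Theorem \ref{theorem:uniquenessalgebroid} forces $\text{Exp}^*\phi_{\text{MC}}$ to equal the 1-form \eqref{eqn:solutionalgebroid}.
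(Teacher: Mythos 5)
Your argument is correct and is essentially identical to the paper's proof: both verify that $\text{Exp}^*\phi_{\text{MC}}$ is anchored, satisfies the full Maurer-Cartan equation (hence \eqref{eqn:equation2algebroid}), and satisfies the boundary condition \eqref{eqn:equation1algebroid} via relation \eqref{eqn:Expexprelation1}, then invoke the uniqueness assertion of Theorem \ref{theorem:uniquenessalgebroid}. Your unwinding of the vertical identification in the boundary-condition check is just a more explicit version of the paper's remark that \eqref{eqn:equation1algebroid} ``is precisely the relation \eqref{eqn:Expexprelation1} when written out explicitly.''
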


\begin{proof}
	We saw already in the text preceding the last theorem that the 1-form $\text{Exp}^*\phi_{\text{MC}}\in\Omega^1_\pi(A_0;\tau^*A)$ is anchored and satisfies the Maurer-Cartan structure equation, and, in particular, it satisfies \eqref{eqn:equation2algebroid}. Moreover, the initial condition \eqref{eqn:equation1algebroid} is satisfied since it is precisely the relation \eqref{eqn:Expexprelation1} when written out explicitly. The corollary now follows from the uniqueness assertion in the theorem.
\end{proof}

The second theorem is the generalization of ``Step 2'' from section \ref{section:liealgebra}. It shows that the solution from the previous theorem is indeed a solution of the realization problem.

\begin{mytheorem}
\label{theorem:jacobialgebroid}
Let $A$ be a pre-Lie algebroid and $\phi\in \Omega^1_\pi(A_0;\tau^*A)$ a solution of \eqref{eqn:equation2algebroid}, \eqref{eqn:equation3algebroid} and \eqref{eqn:equation1algebroid}. Choose $A_0$ to be small enough so that $\phi$ is pointwise an isomorphism. Then $MC_\phi=0$ if and only if $\mbox{Jac}=0$, or more precisely,
\begin{eqnarray}
& \text{Jac}(a,b,c) = - 3 \frac{d}{dt} \left(  \psi^{0,t}_\xi (\text{MC}_\phi)_{ta}(b,c) \right) \big|_{t=0}, \label{eqn:jacobiatoralgebroid} \\
&(\text{MC}_\phi)_{a}(b,c) =  - \int_0^{1} \psi^{1,t}_\xi \text{Jac}(\frac{1}{t}\text{exp}(ta),\phi_{ta}(tb),\phi_{ta}(tc)) \; dt, \label{eqn:jacobiobstructionalgebroid}
\end{eqnarray} 
where $\xi$ is a time dependent section of $A$ satisfying $\xi^t(\gamma_a(t)) = g_a(t)$ for all $t\in (0,1)$.
\end{mytheorem}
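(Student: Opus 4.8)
The plan is to follow the proof of Theorem \ref{theorem:jacobi} essentially line by line, replacing each algebraic operation by its Lie-algebroid counterpart exactly as was done in passing from Theorem \ref{theorem:uniqueness} to Theorem \ref{theorem:uniquenessalgebroid}. Fix $x\in M$, $a\in (A_0)_x$ and $b,c\in A_x$, choose an auxiliary vector bundle connection $\nabla$, and consider the map $f(t,\epsilon,\epsilon')=t(a+\epsilon b+\epsilon' c)$ into the fiber $(A_0)_x$, whose base curve $\tau\circ f$ at $\epsilon=\epsilon'=0$ is $\gamma_a$. As in the Lie algebra case, the whole argument rests on computing the foliated $3$-form $d_{\tau^*\nabla}\text{MC}_\phi$ on the three tangent directions singled out by $f$, namely $(d_{\tau^*\nabla}\text{MC}_\phi)_{ta}(a,tb,tc)$, in two different ways and equating the results.

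First I would compute this geometrically: pulling back along $f$ and commuting the pullback past $d_{\tau^*\nabla}$ by Lemma \ref{lemma:pullbackdifferential}, the two terms carrying a repeated $a$-slot are killed by \eqref{eqn:equation2algebroid} (just as $(\text{MC}_\phi)_{tx}(x,ty)$ vanished in Theorem \ref{theorem:jacobi}), leaving the covariant derivative along $\gamma_a$ of the $A$-path $t\mapsto (\text{MC}_\phi)_{ta}(tb,tc)$. Second I would compute it algebraically: applying $d_{\tau^*\nabla}$ to $\text{MC}_\phi=d_{\tau^*\nabla}\phi+\frac{1}{2}[\phi,\phi]_\nabla$, a graded-Leibniz expansion produces a term $[d_{\tau^*\nabla}\phi,\phi]_\nabla$; substituting $d_{\tau^*\nabla}\phi=\text{MC}_\phi-\frac{1}{2}[\phi,\phi]_\nabla$, evaluating on the $f$-directions, and using the boundary condition in the form \eqref{eqn:equation2algebroid_consequence} (so that the $a$-slot of $\phi$ yields $\phi_{ta}(a)=g_a(t)=\frac{1}{t}\text{exp}(ta)$) together with the anchored condition \eqref{eqn:equation3algebroid}, the brackets should reorganize — using that $\text{Jac}$ is a tensor and hence well defined pointwise — into a ``bracket with $g_a(t)$'' term applied to $(\text{MC}_\phi)_{ta}(tb,tc)$ plus exactly $-\text{Jac}(g_a(t),\phi_{ta}(tb),\phi_{ta}(tc))$.

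Equating the two expressions yields a first-order linear ODE along $\gamma_a$ for $V(t):=(\text{MC}_\phi)_{ta}(tb,tc)$ of the schematic form $\frac{D}{dt}V+[\xi^t,V]=-\text{Jac}(g_a(t),\phi_{ta}(tb),\phi_{ta}(tc))$, the precise analog of $(\frac{d}{dt}+\text{ad}_x)(\text{MC}_\phi)_{tx}(ty,tz)=-\text{Jac}(\ldots)$. The operator $\frac{D}{dt}+[\xi^t,\cdot]$ is exactly the generator of the infinitesimal flow $\psi_\xi$ of the time-dependent section $\xi$ with $\xi^t(\gamma_a(t))=g_a(t)$ (this is the content of the defining property \eqref{eqn:infinitesimalflow}, already used in Theorem \ref{theorem:uniquenessalgebroid}), so $\psi_\xi$ serves as an integrating factor and the ODE becomes $\frac{d}{dt}(\psi^{1,t}_\xi V(t))=-\psi^{1,t}_\xi\,\text{Jac}(\ldots)$. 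Integrating from $0$ to $1$, using $\psi^{1,1}_\xi=\text{id}$ and $(\text{MC}_\phi)_0=0$, gives \eqref{eqn:jacobiobstructionalgebroid}; renormalizing by $1/t^2$ instead (absorbing the quadratic dependence of $V$ on $(b,c)$ and using the $\psi^{0,t}_\xi$ normalization) and letting $t\to 0$ produces the factor $-3$ of \eqref{eqn:jacobiatoralgebroid}, exactly as in Theorem \ref{theorem:jacobi}. The two formulas together give $\text{MC}_\phi=0\Longleftrightarrow\text{Jac}=0$.

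The main obstacle I anticipate is the curvature of $\nabla$. Unlike the Lie algebra case, $d_{\tau^*\nabla}^2\neq 0$ and $\nabla$ need not be compatible with the bracket, so both the geometric and the algebraic computations acquire connection-dependent correction terms that are absent in Theorem \ref{theorem:jacobi}. The crux is to verify that these cancel upon equating the two computations: they must, because $\text{MC}_\phi$ is connection-independent (Proposition \ref{prop:MCindependence}) and because the combination $\frac{D}{dt}+[\xi^t,\cdot]$, although assembled from the $\nabla$-dependent pieces $\nabla_{\dot\gamma_a}$ and $[\cdot,\cdot]_\nabla$, is the intrinsic generator of $\psi_\xi$, which is defined purely through $A$-paths and the bracket. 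Carefully bookkeeping these corrections, and confirming that the tensoriality of $\text{Jac}$ is precisely what licenses evaluating it on the non-section argument $g_a(t)$, is the technical heart of the proof; everything else is a faithful transcription of the Lie algebra argument.
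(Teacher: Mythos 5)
Your overall architecture is the right one and matches the paper's: compute $(d_{\tau^*\nabla}\text{MC}_\phi)_{ta}(a,tb,tc)$ in two ways, where the ``geometric'' computation (pull back along $f$, apply Lemma \ref{lemma:pullbackdifferential}, kill the repeated-$a$ terms by \eqref{eqn:equation2algebroid}) and the endgame (the ODE $\frac{D}{dt}V+[\xi^t,V]=-\text{Jac}(\cdots)$, the integrating factor $\psi_\xi$ via \eqref{eqn:infinitesimalflow} and \eqref{eqn:covariantderivative}, integration for \eqref{eqn:jacobiobstructionalgebroid} and the $1/t^2$ limit for \eqref{eqn:jacobiatoralgebroid}) are exactly as in the paper. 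The gap is in your second, ``algebraic'' computation. You propose to expand $d_{\tau^*\nabla}\bigl(\tfrac{1}{2}[\phi,\phi]_\nabla\bigr)$ by a graded Leibniz rule into $[d_{\tau^*\nabla}\phi,\phi]_\nabla$ and then substitute $d_{\tau^*\nabla}\phi=\text{MC}_\phi-\tfrac{1}{2}[\phi,\phi]_\nabla$. But no such Leibniz rule holds here: $[\cdot,\cdot]_\nabla$ is the torsion tensor, which is not parallel for the connection induced on $\text{Hom}(\Lambda^2A,A)$, so $d_{\tau^*\nabla}[\phi,\phi]_\nabla-2[d_{\tau^*\nabla}\phi,\phi]_\nabla$ contains a $(\nabla[\cdot,\cdot]_\nabla)$-term, and in addition $d_{\tau^*\nabla}^2\phi$ is a curvature term. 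You correctly flag these corrections as ``the technical heart,'' but the cancellation argument you offer --- that they must vanish because $\text{MC}_\phi$ is connection-independent (Proposition \ref{prop:MCindependence}) --- does not apply: the $3$-form $d_{\tau^*\nabla}\text{MC}_\phi$ itself \emph{does} depend on $\nabla$, and only the final identity \eqref{eqn:jacobiatordiffalgebroid} is connection-free. So as written, the central identity of the proof is asserted rather than derived.

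The paper avoids this entirely, and the tell is the hypothesis you never use: ``Choose $A_0$ small enough so that $\phi$ is pointwise an isomorphism.'' The second computation in the paper does not differentiate the bracket term at all. Instead, it uses $\phi^{-1}:\Gamma(A)\to\mathfrak{X}(A_0)$ to produce genuine (time-dependent) vector fields $\tilde{a}=\phi^{-1}(\xi^t)$, $\tilde{b}=\phi^{-1}(\eta_b^t)$, $\tilde{c}=\phi^{-1}(\eta_c^t)$ whose $\phi$-images are the chosen time-dependent sections, and evaluates the $3$-form $d_{\tau^*\nabla}\text{MC}_\phi$ on $(\tilde{a},t\tilde{b},t\tilde{c})$ via the Koszul formula; the resulting terms $\text{MC}_\phi(\cdot,\cdot)$ and $\text{MC}_\phi([\cdot,\cdot],\cdot)$ are then expanded using \eqref{eqn:equation2algebroid}, \eqref{eqn:equation3algebroid} and \eqref{eqn:equation1algebroid}, the key identities being $\phi_{ta}([\tilde{a},\tilde{b}])=[\xi^t,\eta_b^t]_{\gamma_a(t)}$, $\phi_{ta}([\tilde{a},\tilde{c}])=[\xi^t,\eta_c^t]_{\gamma_a(t)}$ and $(\text{MC}_\phi)_{ta}(b,c)=-\phi_{ta}([\tilde{b},\tilde{c}])+[\eta_b^t,\eta_c^t]_{\gamma_a(t)}$. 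This keeps every step intrinsic (only first covariant derivatives of curves appear, never $d_{\tau^*\nabla}^2$ or a derivative of the torsion), and the Jacobiator assembles from honest brackets of sections, with its tensoriality then licensing the substitution of $\tfrac{1}{t}\text{exp}(ta)=g_a(t)$ in the first slot. To repair your proof, replace the Leibniz-rule step with this $\phi^{-1}$ device (or else actually carry out the curvature/torsion bookkeeping you defer, which is substantially more work).
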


\begin{proof}
The proof goes along the same line as the proof of Theorem \ref{theorem:jacobi}. As in Theorem \ref{theorem:jacobi}, we will compute
\begin{equation}
 d_{\tau^*\nabla}(\text{MC}_\phi)_{ta}(a,tb,tc)
 \label{eqn:differentialMCalgebroid}
\end{equation}
in two different way, where $t \in (0,1),\;x\in M,\;a\in (A_0)_x, \;y,z\in A_x$ and $\nabla$ some vector bundle connection on $A$.

\noindent 1) Consider the map $f:(0,1)\times(-\delta,\delta)^2 \to \mathfrak{g},\; f(t,\epsilon,\epsilon')=t(a+\epsilon b+\epsilon' c)$. Recall that $\gamma_a$ is the base curve of the geodesic $g_a$ that satisfies $\gamma_a(t)=\tau(ta)$.
\begin{equation*}
 \begin{split}
    (d_{\tau^*\nabla}\text{MC}_\phi)_{ta}(a,tb,tc) &= (f^*d_{\tau^*\nabla}\text{MC}_\phi)_{(t,0,0)}(\frac{\partial}{\partial t},\frac{\partial}{\partial \epsilon},\frac{\partial}{\partial \epsilon'}) \\ &= (d_{f^*\tau^*\nabla}f^*\text{MC}_\phi)_{(t,0,0)}(\frac{\partial}{\partial t},\frac{\partial}{\partial \epsilon},\frac{\partial}{\partial \epsilon'}) \\
    &=  \nabla_{\dot{\gamma}_a} (\text{MC}_\phi)_{ta}(tb,tc),  \\
 \end{split}
\end{equation*}
where the final expression is the covariant derivative of the curve $t\mapsto (\text{MC}_\phi)_{ta}(tb,tc)$ covering $\gamma_a$. 

\noindent 2) Since $\phi\in\Omega^1_\pi(A_0;\tau^*A)$ is a pointwise isomorphism, it induces a linear map $\phi^{-1}:\Gamma(A)\to \mathfrak{X}(A_0)$. Let $\xi$ be as in the statement of the theorem, let $\eta_b$ and $\eta_c$ be time dependent sections of $A$ satisfying $\eta_b^t(\gamma_a(t)) = \phi_{ta}(tb)$ and $\eta_c^t(\gamma_a(t)) = \phi_{ta}(tc)$ and let $\sigma$ be a time dependent section of $A$ satisfying $\sigma^t(\gamma_a(t)) = (\text{MC}_\phi)_{ta}(tb,tc)$. Let $\tilde{a},\tilde{b},\tilde{c}$ be time dependent vector fields on $A_0$ defined by $\tilde{a}^t = \phi^{-1} (\xi^t),\;\tilde{b}^t = \phi^{-1} (\eta_b^t),\;\tilde{c}^t = \phi^{-1} (\eta_c^t)$. 
\begin{equation*}
 \begin{split}
  & (d_{\tau^*\nabla}\text{MC}_\phi)_{ta}(a,tb,tc) = (d_{\tau^*\nabla}\text{MC}_\phi)(\tilde{a},t\tilde{b},t\tilde{c})_{ta}  \\
  & = \nabla_{\dot{\gamma}_a}\sigma^t (\gamma_a(t)) - [[\xi^t,\eta^t_b],\eta^t_c]_{ta} - [[\eta^t_c,\xi^t],\eta^t_b]_{ta} + [\sigma^t - [\eta^t_b,\eta^t_c],\xi^t]_{ta} \\
  & = \nabla_{\dot{\gamma}_a}\sigma^t (\gamma_a(t)) - \frac{d}{ds}\Big|_{s=t} \psi^{t,s}_\xi \sigma^t(\gamma_a(s)) - \text{Jac}(\frac{1}{t}\text{exp}(ta),\phi_{ta}(tb),\phi_{ta}(tc)).
 \end{split} 
\end{equation*}
The second equality is a slightly messy yet straightforward computation. It involves expanding $\text{MC}_\phi$ with respect to the chosen connection, using the choices we made above of time dependent sections, and using \eqref{eqn:equation2algebroid}, \eqref{eqn:equation1algebroid} and \eqref{eqn:equation3algebroid}. In particular, it is used that  \eqref{eqn:equation2algebroid} implies that: $\phi_{ta}([\tilde{a},\tilde{b}]) = [\xi^t,\eta_b^t]_{\gamma_a(t)},\;\phi_{ta}([\tilde{a},\tilde{c}]) = [\xi^t,\eta_c^t]_{\gamma_a(t)}$. Furthermore, $(\text{MC}_\phi)_{ta}(b,c) = - \phi_{ta}([\tilde{b},\tilde{c}]) + [\eta_b^t,\eta_c^t]_{\gamma_a(t)}$. In the last equality we express the bracket $[\xi^t,\sigma^t]$ using the infinitesimal flow, see  \eqref{eqn:infinitesimalflow}.

After equating the two expressions obtained, using characterization \eqref{eqn:covariantderivative} of covariant derivatives of curves and applying $\psi^{1,t}_\xi$,  \eqref{eqn:differentialMCalgebroid} becomes
\begin{equation}
 \psi^{1,t}_\xi\text{Jac}(\frac{1}{t}\text{exp}(ta),\phi_{ta}(tb),\phi_{ta}(tc)) = - \frac{d}{d t} \left( \psi^{1,t}_\xi (\text{MC}_\phi)_{ta}(tb,tc) \right).
 \label{eqn:jacobiatordiffalgebroid}
 \end{equation}
The remaining arguments are identical to Theorem \ref{theorem:jacobi}.
\end{proof}

\subsection{The Poisson Case vs. the Lie Algebroid Case}
\label{subsection:poissonvsalgebroid}
Given the well known relations between Poisson manifolds and Lie algebroids, it is natural to wonder as to the relation between the instances of the Maurer-Cartan structure equation associated with these structures, i.e. as to the relation between Section \ref{section:poisson} and Section \ref{section:algebroid} of this paper. Let us briefly touch upon this. 

In one direction, any Lie algebroid $A\to M$ induces a Poisson structure on the total space of the dual vector bundle $A^*\to M$ known as a linear Poisson structure (see \cite{Mackenzie2005}). This generalizes the construction of a linear Poisson structure on the dual of a Lie algebra. At the level of the associated Maurer-Cartan structure equations, it is not hard to verify that, locally and under obvious identifications, the Maurer-Cartan structure equations as well as the solutions are one and the same on both sides of this correspondence. In particular, trivializing $A$ and computing the 1-form \eqref{eqn:solutionalgebroid} will produce the same result as obtained by computing the 1-form \eqref{eqn:solutionpoissonintro} associated with the induced trivialization of $A^*$. This is, of course, a generalization of the case of a Lie algebra which was discussed in Remark \ref{remark:linearpoisson}.

In the opposite direction, any Poisson manifold $(M,\pi)$ induces a Lie algebroid structure on the cotangent bundle $T^*M\to M$, as originally shown in \cite{Coste1987}. In that same paper, the authors prove that the local symplectic realization constructed by Weinstein in \cite{Weinstein1983} (and discussed in section \ref{section:poisson} above) has a canonically induced local symplectic groupoid structure on its total space whose associated Lie algebroid is (the restriction of) $T^*M\to M$. This same phenomenon occurs at the level of the Maurer-Cartan structure equations. Using the notation of Section \ref{section:poisson}, the local solution of the Maurer-Cartan structure equation associated with the Poisson manifold $(\mathcal{O},\pi)$, with $\mathcal{O}\subset V$, induces a local solution to the Maurer-Cartan structure equation associated with the Lie algebroid $T^*\mathcal{O} = \mathcal{O}\times V^* \xrightarrow{\pi} \mathcal{O}$ by differentiation of the coefficients, or more precisely, by the map
\begin{equation}
\label{eqn:relationpoissonalgebroid}
\Omega^1(V^*;C^\infty(\mathcal{O}))\to \Omega^1_\pi((T^*\mathcal{O})_0;\tau^*(T^*\mathcal{O})),\;\;\; \phi \mapsto \hat{\phi},
\end{equation}
with $\hat{\phi}_{x,\xi}(\zeta) = d(\phi_\xi(\zeta))_{\tau(x)}$ for all $x\in \mathcal{O},\;\xi,\zeta\in V^*$.

Note that whereas in the Lie algebroid case we are able to obtain a ``wide'' solution, i.e. on an open neighborhood of the zero section of $T^*M\to M$, in the Poisson case we only obtain a local one around a point in $M$. It would be interesting to further investigate the relation given by \eqref{eqn:relationpoissonalgebroid} to see if a ``wide'' solution of the Lie algebroid case induces a ``wide'' solution of the Poisson case, thus producing yet another proof for the existence of global symplectic realizations. 

\appendix

\section{Facts on (pre-)Lie Algebroids}

In this appendix, various notions are recalled which are needed in section \ref{section:algebroid} for the formulation of the Maurer-Cartan structure equation on a Lie algebroid and its solution. For more details, the reader is referred to \cite{crainic2003-1}. Note that all the notions that appear here and that are presented in \cite{crainic2003-1} do not require the Jacobi identity and are therefore valid for pre-Lie algebroids as they are for Lie algebroids.

Let $A\to M$ be a pre-Lie algebroid (see section \ref{section:algebroid} for the definition). An \textbf{$A$-connection} on a vector bundle $E\to M$ is an $\mathbb{R}$-bilinear map $\overline{\nabla}:\Gamma(A)\times \Gamma(E) \to \Gamma(E)$ satisfying the connection-like properties
\begin{equation*}
\overline{\nabla}_{f\alpha}s = f\overline{\nabla}_\alpha s,\;\;\;\; \overline{\nabla}_\alpha(fs) = f\overline{\nabla}_\alpha s + \mathcal{L}_{\rho(\alpha)}(f)s,\;\;\;\;\forall \alpha\in\Gamma(A),\;s\in\Gamma(E),\; f\in C^\infty(M).
\end{equation*}

For the remainder of the appendix, let $A\to M$ be a pre-Lie algebroid equipped with an $A$-connection $\overline{\nabla}$. Note that there will be two different connections that will play a role in this appendix (and in section \ref{section:algebroid}): an $A$-connection $\overline{\nabla}$ on $A$ that is part of the data, and an auxiliary vector bundle connection $\nabla$ on $A$ that is used to write down the Maurer-Cartan structure equation globally, and which is not part of the data. 

\subsection{Time Dependent Sections}

A \textbf{time dependent section} $\xi$ of $A$ is a map $\xi:I\times M\to A,\; (t,x)\mapsto \xi^t(x)$ (with $I$ some open interval), such that $\xi^t$ is a section of $A$ for all $t\in I$. 

If $\nabla:\mathfrak{X}(M)\times \Gamma(A)\to \Gamma(A)$ is a vector bundle connection, then given a base curve $\gamma:I\to M$ and a curve $u:I\to A$ covering $\gamma$, the covariant derivative $(\nabla_{\dot{\gamma}}u)(t) = ((\gamma^*\nabla)_{\frac{\partial}{\partial t}} u)(t)$ can be characterized using time dependent sections as follows: choose a time dependent section $\xi$ of $A$ satisfying $\xi^t(\gamma(t)) = u(t)$ for all $t\in I$, then 
 \begin{equation}
  (\nabla_{\dot{\gamma}}u)(t) = (\nabla_{\dot{\gamma}}\xi^t)(x) + \frac{d\xi^t}{dt}(x),
  \label{eqn:covariantderivative}
 \end{equation}
where $x=\gamma(t)$. 

We will also use time dependent sections to express the bracket of a pre-Lie algebroid in a Lie derivative-like fashion, as one does for the bracket of vector fields. This involves the notiַַon of an infinitesimal flow. Let $\xi$ be a time dependent section of $A$ and $\rho(\xi)$ the corresponding time dependent vector field on $M$. Let $\varphi^{t,s}_{\rho(\xi)}$ denote the flow of $\rho(\xi)$ from time $s$ to $t$. The \textbf{infinitesimal flow},
\begin{equation*}
 \psi^{t,s}_\xi: A_x \to A_{\varphi^{t,s}_{\rho(\xi)}},\;\;\;\;\; x\in M,
\end{equation*}
is the unique linear map satisfying the properties $\psi^{u,t}_\xi\circ\psi^{t,s}_\xi = \psi^{u,s}_\xi$, $\psi^{s,s}_\xi=\text{id}$ and
\begin{equation*}
 \frac{d}{dt}\Big|_{t=s} \psi^{s,t}_\xi \alpha(\varphi^{t,s}_{\rho(\xi)}(x)) = [\xi^s,\alpha]_x,\;\;\;\;\;\forall \alpha\in\Gamma(A),\; x\in M.
\end{equation*}
Defining the pullback of sections by the infinitesimal flow as $(\psi^{t,s}_\xi)^*(\alpha)(x) = \psi^{s,t}_\xi \alpha(\varphi^{t,s}_{\rho(\xi)}(x))$ for all $\alpha\in\Gamma(A),\; x\in M$, the previous equation can be expressed in the more familiar form
\begin{equation}
 \frac{d}{dt}\Big|_{t=s} (\psi^{t,s}_\xi)^* \alpha = [\xi^s,\alpha],\;\;\;\;\;\forall \alpha\in\Gamma(A).
 \label{eqn:infinitesimalflow}
\end{equation}
For more on infinitesimal flows and their global counterparts, flows along invariant time dependent vector fields on Lie groupoids, see \cite{crainic2003-1}.

\subsection{Geodesics}
 
An \textbf{$A$-path} is a curve $g:I\to A$ with base curve $\gamma:I\to M, \gamma(t)=\pi(g(t))$, such that
\begin{equation*}
 \rho(g(t)) = \dot{\gamma}(t),\;\;\;\;\forall t\in I.
\end{equation*}

Let $g$ be an $A$-path with base curve $\gamma$, and let $u: I\to A$  be another curve covering $\gamma$. The \textbf{covariant derivative} of $u$ with respect to $g$ is the curve $\overline{\nabla}_gu : I \to A$, which is defined in analogy to the usual covariant derivative described above: choose a time dependent section $\xi$ of $A$ satisfying $\xi^t(\gamma(t)) = u(t)$ for all $t\in I$, then 
 \begin{equation*}
  (\overline{\nabla}_gu)(t) = (\overline{\nabla}_g\xi^t)(x) + \frac{d\xi^t}{dt}(x),
 \end{equation*}
where $x=\gamma(t)$.

A \textbf{geodesic} is a curve $g:I\to A$ satisfying the geodesic equation $\overline{\nabla}_gg=0$. Geodesics are $A$-paths. Given any point $a\in A$, there is a unique maximal geodesic $g_a:I_a\to A$ satisfying $g_a(0)=a$ with domain $I_a$. The base curve of $g_a$ will be denoted by $\gamma_a$. Geodesics satisfy the following basic property:
\begin{equation}
   g_{sa}(t) = sg_a(st),\;\;\;\;\; \forall\; a\in A,\;s,t\in\mathbb{R},\;t\in I_{sa},
   \label{eqn:geodesiclinearity}
 \end{equation} 
which can be easily verified by checking that the curve $t\mapsto sg_a(st)$ satisfies the geodesic equation and then by noting that by uniqueness it must be equal to $g_{sa}$ since at time $0$ it takes the value $sa$.

Let $A_0 \subset A$ be a neighborhood of the zero section such that $g_a$ is defined up to time $1$ for all $a\in A_0$.  The \textbf{exponential map} is defined as $\text{exp}:A_0\to A,\; a \mapsto  g_a(1)$. The point $\pi(\text{exp}(a))\in M$ will be called the \textbf{target} of $a$ and $\tau = \pi\circ\text{exp}:A_0\to M$ the \textbf{target map}. 

\subsection{The Maurer-Cartan 2-Form}

Let $\Omega^*_\pi(A_0;\tau^*A)$ denote the space of foliated differential forms on $A_0$ (foliated with respect to the foliation by $\pi$-fibers) which take values in $\tau^*A$. 

Let $\nabla:\mathfrak{X}(M)\times\Gamma(A)\to\Gamma(A)$ be a vector bundle connection on $A$. The torsion $[\cdot,\cdot]_\nabla \in \text{Hom}(\Lambda^2A,A)$ of $\nabla$ is defined at the level of sections as follows
\begin{equation*}
 [\alpha,\beta]_\nabla = [\alpha,\beta] - \nabla_{\rho(\alpha)}\beta + \nabla_{\rho(\beta)}\alpha,\;\;\;\;\; \forall \alpha,\beta\in \Gamma(A),
\end{equation*}
and easily checked to be $C^\infty(M)$-linear in both slots. The torsion induces a bracket $[\cdot,\cdot]_\nabla:\Omega^p_\pi(A_0;\tau^*A)
\times\Omega^q_\pi(A_0;\tau^*A)\to\Omega^{p+q}_\pi(A_0;\tau^*A)$ which plays the role of the wedge product on $A$-valued forms, and similarly to the wedge product, it is defined by the following formula
\begin{equation*}
[\omega,\eta]_\nabla(X_1,...,X_{p+q})_a = \sum_{\sigma\in S_{p,q}} \text{sgn}(\sigma) [\omega(X_{\sigma(1)},...,X_{\sigma(p)})_a,\eta(X_{\sigma(p+1)},...,X_{\sigma(p+q)})_a]_\nabla,
\end{equation*}
for all $a\in A_0$, where $S_{p,q}$ is the set of $(p,q)$-shuffles.

In general, a connection $\nabla$ on a vector bundle $E\to M$ induces a differential-like map $d_\nabla:\Omega^*(M;E)\to \Omega^{*+1}(M;E)$ by the usual Koszul-type formula. For example, if $\phi\in\Omega^1(M;E)$,
\begin{equation*}
 d_\nabla\phi(X,Y) = \nabla_X\phi(Y) - \nabla_Y\phi(X) - \phi([X,Y]),\;\;\;\;\;\forall X,Y\in\mathfrak{X}(M).
\end{equation*}
The map $d_\nabla$ squares to zero if and only if the connection is flat. If $M$ has a foliation $\mathcal{F}$ and $\Omega^*_\mathcal{F}(M;E)$ are the foliated forms, then the map $d_\nabla$ descends to a map of foliated forms $d_\nabla:\Omega^*_\mathcal{F}(M;E)\to \Omega^{*+1}_\mathcal{F}(M;E)$. We will need the following property whose proof is elementary and will be left out:

\begin{mylemma}
 Let $E\to M$ be a vector bundle equipped with a connection $\nabla$ and let $f: N \hookrightarrow M$ be a submanifold. Then the following property holds:
 \begin{equation*}
    f^* d_\nabla \phi = d_{f^*\nabla} f^*\phi,
 \end{equation*}
for any $\phi\in \Omega^*(M;E)$. If $N$ and $M$ are foliated and $f$ is a foliated map, then the property holds for $\phi\in\Omega^*_\mathcal{F}(M;E)$.
\label{lemma:pullbackdifferential}
\end{mylemma}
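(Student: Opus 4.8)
The plan is to exploit the fact that both $f^*$ and $d_\nabla$ are local operators, reducing the statement to a computation in a trivializing neighborhood where $d_\nabla$ is governed by an ordinary Leibniz rule. First I would note that it suffices to verify the identity over an open set $U\subseteq M$ on which $E$ admits a local frame $\{e_i\}$, working simultaneously over $f^{-1}(U)\subseteq N$; since $f^*$ and $d_\nabla$ commute with restriction to opens and the claim is linear in $\phi$, this loses no generality. Over such a $U$ every $E$-valued form decomposes as a finite sum $\phi=\sum_k \alpha_k\otimes e_k$ with $\alpha_k\in\Omega^*(U)$ ordinary (scalar-valued) forms, and the Leibniz characterization of the exterior covariant derivative gives
\begin{equation*}
 d_\nabla\phi=\sum_k\big(d\alpha_k\otimes e_k+(-1)^{|\alpha_k|}\alpha_k\wedge\nabla e_k\big).
\end{equation*}

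The key input is the defining property of the pullback connection in degree zero: for every section $s\in\Gamma(E)$ one has $f^*(\nabla s)=(f^*\nabla)(f^*s)$, which is simply the statement that $(f^*\nabla)_X(f^*s)=\nabla_{df(X)}s$ for all $X\in\mathfrak{X}(N)$. Granting this, I would apply $f^*$ to the displayed formula and use the two elementary compatibilities of pullback on scalar forms, namely $f^*\circ d=d\circ f^*$ and $f^*(\alpha\wedge\beta)=f^*\alpha\wedge f^*\beta$. This turns the first summand into $d(f^*\alpha_k)\otimes f^*e_k$ and the second into $(-1)^{|\alpha_k|}f^*\alpha_k\wedge (f^*\nabla)(f^*e_k)$, so that $f^*(d_\nabla\phi)$ is exactly the Leibniz expansion of $d_{f^*\nabla}$ applied to $f^*\phi=\sum_k f^*\alpha_k\otimes f^*e_k$. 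This yields $f^*d_\nabla\phi=d_{f^*\nabla}f^*\phi$ and completes the non-foliated case.

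For the foliated statement I would observe that when $N$ and $M$ are foliated and $f$ is a foliated map, the same local decomposition may be taken with the $\alpha_k$ foliated, the pullback $f^*\alpha_k$ is again foliated, and the descended operator $d_\nabla$ on $\Omega^*_\mathcal{F}$ acts by the identical Leibniz formula; hence the computation above restricts verbatim to foliated forms.

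There is no genuine obstacle here — the lemma is bookkeeping — and the only points demanding care are the sign in the Leibniz rule and the degree-zero compatibility $f^*(\nabla s)=(f^*\nabla)(f^*s)$, which is essentially the definition of $f^*\nabla$; everything else reduces to the standard naturality of the de Rham differential and the wedge product under pullback. I would also remark that the argument uses nothing about $f$ being an embedding: it holds verbatim for an arbitrary smooth map $f\colon N\to M$, which is in fact the generality in which the lemma is invoked in Section \ref{section:algebroid}, where $f$ is a map out of a product of intervals.
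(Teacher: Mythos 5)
Your proof is correct. The paper itself omits the argument (``whose proof is elementary and will be left out''), and what you give is precisely the standard verification one would expect: localize to a frame, use the Leibniz characterization $d_\nabla(\alpha\otimes s)=d\alpha\otimes s+(-1)^{|\alpha|}\alpha\wedge\nabla s$ (which is equivalent to the Koszul formula the paper uses to define $d_\nabla$), and reduce everything to naturality of $d$ and $\wedge$ under pullback together with the defining property $(f^*\nabla)(f^*s)=f^*(\nabla s)$ of the pullback connection. Your closing remark is also worth keeping: the lemma as stated assumes $f$ is an embedding, but the paper actually invokes it for maps such as $f(t,\epsilon)=t(a+\epsilon b)$ out of a parameter rectangle, which need not be embeddings; your argument shows the identity holds for an arbitrary smooth (foliated) map, which is the generality genuinely needed in Sections \ref{section:liealgebra} and \ref{section:algebroid}.
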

In our particular case, the induced pull-back connection $\tau^*\nabla$ on the vector bundle $\tau^*A\to A_0$ induces a differential-like map $d_{\tau^*\nabla}:\Omega^*_\pi(A_0;\tau^*A) \to \Omega^{*+1}_\pi(A_0;\tau^*A)$.

A 1-form $\phi\in\Omega^1_\pi(A_0;\tau^*A)$ is said to be \textbf{anchored} if $\rho\circ \phi = d\tau$, or more explicitly, if $\rho(\phi_a(b))=(d\tau)_a(b)$ for all $a\in (A_0)_x,\;b\in A_x$ (where we are using the canonical identification $T_aA_0 \cong A_x$).

\begin{myprop}
 Let $\phi\in\Omega^1_\pi(A_0;\tau^*A)$. If $\phi$ is anchored, then the 2-form
 \begin{equation}
  d_{\tau^*\nabla}\phi + \frac{1}{2}[\phi,\phi]_\nabla \in \Omega^2_\pi(A_0;\tau^*A)
  \label{eqn:maurercartantwoform}
 \end{equation}
 is independent of the choice of connection $\nabla$.
 \label{prop:MCindependence}
\end{myprop}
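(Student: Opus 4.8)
The plan is to fix two auxiliary vector bundle connections $\nabla$ and $\nabla'$ on $A$ and to check that the 2-form \eqref{eqn:maurercartantwoform} agrees for both. Writing $S := \nabla' - \nabla$, I would first recall that the difference of two connections is tensorial, so $S$ is a section of $T^*M\otimes\mbox{End}(A)$, i.e. a $C^\infty(M)$-bilinear map $S:\mathfrak{X}(M)\times\Gamma(A)\to\Gamma(A)$. The strategy is then to compute the variation of each of the two summands $d_{\tau^*\nabla}\phi$ and $\frac{1}{2}[\phi,\phi]_\nabla$ under the replacement of $\nabla$ by $\nabla'$ and to verify that, for an anchored $\phi$, these two variations cancel exactly.

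First I would treat the torsion term. From the definition $[\alpha,\beta]_\nabla = [\alpha,\beta] - \nabla_{\rho(\alpha)}\beta + \nabla_{\rho(\beta)}\alpha$ together with $\nabla' = \nabla + S$, one reads off directly that $[\alpha,\beta]_{\nabla'} - [\alpha,\beta]_\nabla = S(\rho(\beta),\alpha) - S(\rho(\alpha),\beta)$ for all $\alpha,\beta\in\Gamma(A)$. Since for a 1-form the shuffle formula specializes to $\frac{1}{2}[\phi,\phi]_\nabla(X,Y) = [\phi(X),\phi(Y)]_\nabla$ (using antisymmetry of the torsion bracket), it follows that $\frac{1}{2}[\phi,\phi]_{\nabla'}(X,Y) - \frac{1}{2}[\phi,\phi]_\nabla(X,Y) = S(\rho(\phi(Y)),\phi(X)) - S(\rho(\phi(X)),\phi(Y))$, where $S$ is understood to be pulled back to $A_0$.

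Next I would treat the differential term. The pullback connections satisfy $\tau^*\nabla' - \tau^*\nabla = \tau^*S$, the pulled-back endomorphism-valued 1-form, so from the Koszul formula $d_{\tau^*\nabla}\phi(X,Y) = (\tau^*\nabla)_X\phi(Y) - (\tau^*\nabla)_Y\phi(X) - \phi([X,Y])$ the purely algebraic term $\phi([X,Y])$, which does not involve the connection, drops out of the difference. This leaves $d_{\tau^*\nabla'}\phi(X,Y) - d_{\tau^*\nabla}\phi(X,Y) = S(d\tau(X),\phi(Y)) - S(d\tau(Y),\phi(X))$, where once again $S$ is pulled back along $\tau$ and its $TM$-slot is fed $d\tau(X)$.

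The punchline, and the step where the hypothesis is indispensable, is to invoke the anchored condition $\rho\circ\phi = d\tau$: it lets me replace $\rho(\phi(X))$ by $d\tau(X)$ in the bracket variation computed above, turning it into precisely the negative of the differential variation. Hence the two variations sum to zero and \eqref{eqn:maurercartantwoform} is independent of $\nabla$. I expect the only genuine difficulty to lie in setting up the pullback data cleanly — identifying $T_aA_0\cong A_x$ along the $\pi$-fibers, making sense of $\tau^*S$ as an endomorphism of $\tau^*A$, and of $d\tau(X)$ as the argument fed into its cotangent slot — rather than in the algebra, which collapses to a one-line cancellation once the anchored condition is applied.
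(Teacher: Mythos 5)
Your proposal is correct and follows essentially the same route as the paper's proof: both compute the variation of the torsion bracket term and of the $d_{\tau^*\nabla}$ term under a change of connection via the difference tensor, and observe that the anchored condition $\rho\circ\phi=d\tau$ makes the two variations cancel. The only difference is the sign convention for the difference tensor ($S=\nabla'-\nabla$ versus the paper's $\omega=\nabla-\nabla'$), which is immaterial.
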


\begin{proof}
Let $\nabla$ and $\nabla'$ be two connections, then by the defining properties of a connection, $\omega:=\nabla-\nabla' \in \Omega^1(M;\text{Hom}(A,A))$. Let $a\in A_0$ and $X,Y\in T_aA_0$ s.t. $d\pi(X) = d\pi(Y)=0$. On the one hand,
\begin{equation*}
 (d_{\tau^*\nabla}\phi - d_{\tau^*\nabla'}\phi)(X,Y) = \omega_{\tau(a)}((d\tau)_a(X))(\phi_a(Y)) - \omega_{\tau(a)}((d\tau)_a(Y))(\phi_a(X)),
\end{equation*}
on the other hand,
\begin{equation*}
 (\frac{1}{2}[\phi,\phi]_\nabla - \frac{1}{2}[\phi,\phi]_{\nabla'})(X,Y) = -\omega_{\tau(a)}(\rho(\phi_a(X)))(\phi_a(Y)) + \omega_{\tau(a)}(\rho(\phi_a(Y)))(\phi_a(X)).
\end{equation*}
The sum of these two equations vanishes if $\phi$ is anchored.
\end{proof}

We call the 2-form given by \eqref{eqn:maurercartantwoform} the \textbf{Maurer-Cartan 2-form} and denote it by $\text{MC}_\phi$. 

\bibliographystyle{plain}
\bibliography{../../../bibliography/bibliography}

\end{document}